\newcommand{\Exp}{\mathbb{E}}
\tikzstyle{dot}=[circle,fill,black,inner sep=1pt]
\tikzset{
  on each segment/.style={
    decorate,
    decoration={
      show path construction,
      moveto code={},
      lineto code={
        \path [#1]
        (\tikzinputsegmentfirst) -- (\tikzinputsegmentlast);
      },
      curveto code={
        \path [#1] (\tikzinputsegmentfirst)
        .. controls
        (\tikzinputsegmentsupporta) and (\tikzinputsegmentsupportb)
        ..
        (\tikzinputsegmentlast);
      },
      closepath code={
        \path [#1]
        (\tikzinputsegmentfirst) -- (\tikzinputsegmentlast);
      },
    },
  },
  mid arrow/.style={postaction={decorate,decoration={
        markings,
        mark=at position .5 with {\arrow[#1]{stealth}}
      }}},
  early arrow/.style={postaction={decorate,decoration={
        markings,
        mark=at position .2 with {\arrow[#1]{stealth}}
      }}},
}
\def\alternatecolorred{%
    \pgfkeysalso{red}%
    \global\let\alternatecolor\alternatecolorblue 
}
\def\alternatecolorblue{%
    \pgfkeysalso{blue}%
    \global\let\alternatecolor\alternatecolorred 
}
\newcommand{\altred}{\let\alternatecolor\alternatecolorred 
\tikzset{every edge/.append code = {%
    \global\let\currenttarget\tikztotarget 
    \pgfkeysalso{append after command={(\currenttarget)}}
			\alternatecolor
}}
}
\newcommand{\altblue}{\let\alternatecolor\alternatecolorblue 
\tikzset{every edge/.append code = {%
    \global\let\currenttarget\tikztotarget 
    \pgfkeysalso{append after command={(\currenttarget)}}
			\alternatecolor
}}
}
\tikzstyle{vertexdot}=[circle, draw, fill=black, minimum size=3,inner sep=0pt]
\DeclareMathAlphabet{\varmathbb}{U}{bbold}{m}{n}
\renewcommand{\tilde}{\widetilde}
\newcommand{\1}{\mathbbm{1}}
\theoremstyle{plain}
\newtheorem*{theorem*}{Theorem}
\newtheorem*{proposition*}{Proposition}
\newcommand{\cN}{\mathcal{N}}
\newcommand{\RR}{\mathbb{R}}
\newcommand*{\kl}[2]{\operatorname{D}(#1\,\|\,#2)}
\newcommand*{\chis}[2]{\chi^2(#1, #2)}
\newcommand*{\E}{\mathbb E}
\newcommand*{\ep}{\varepsilon}
\newcommand*{\defeq}{\triangleq}
\newcommand*{\dd}{\, \mathrm{d}}
\newcommand*{\mmse}[2]{\operatorname{MMSE}_{#1}(#2)}
\newtheorem{theorem}{Theorem}
\newtheorem{proposition}{Proposition}
\newtheorem{lemma}{Lemma}
\theoremstyle{definition}
\newtheorem{assumption}{Assumption}
\newtheorem{definition}{Definition}
\DeclarePairedDelimiter\ceil{\lceil}{\rceil}
\title{The All-or-Nothing Phenomenon in Sparse Tensor PCA}
\author{
Jonathan Niles-Weed\thanks{Courant Institute of Mathematical Sciences and Center for Data Science, New York University; e-mail: {\tt jnw@cims.nyu.edu}. JNW acknowledges the support of the Institute for Advanced Study, where a portion of this research was conducted.}
\and
{ Ilias Zadik}\thanks{Center for Data Science, New York University ; e-mail: {\tt zadik@nyu.edu.} IZ is supported by a CDS Moore-Sloan postdoctoral fellowship.}
}
\begin{document}

\maketitle

\begin{abstract}

We study the statistical problem of estimating a rank-one sparse tensor corrupted by additive Gaussian noise, a model also known as sparse tensor PCA. We show that for Bernoulli and Bernoulli-Rademacher distributed signals and \emph{for all} sparsity levels which are sublinear in the dimension of the signal, the sparse tensor PCA model exhibits a phase transition called the \emph{all-or-nothing phenomenon}. This is the property that for some signal-to-noise ratio (SNR) $\mathrm{SNR_c}$ and any fixed $\epsilon>0$, if the SNR of the model is below $\left(1-\epsilon\right)\mathrm{SNR_c}$, then it is impossible to achieve any arbitrarily small constant correlation with the hidden signal, while if the SNR is above $\left(1+\epsilon \right)\mathrm{SNR_c}$, then it is possible to achieve almost perfect correlation with the hidden signal. The all-or-nothing phenomenon was initially established in the context of sparse linear regression, and over the last year also in the context of sparse 2-tensor (matrix) PCA, Bernoulli group testing, and generalized linear models. Our results follow from a more general result showing that for any Gaussian additive model with a discrete uniform prior, the all-or-nothing phenomenon follows as a direct outcome of an appropriately defined ``near-orthogonality" property of the support of the prior distribution. 
\end{abstract}

\section{Introduction}
A central question in information theory and statistics is to establish the fundamental limits for recovering a planted signal in high-dimensional models.
A common theme in these works is the presence of \emph{phase transitions}, where the behavior of optimal estimators changes dramatically at critical values.
An infamous example is the PCA transition, where one observes a matrix $\mathbf Y \in \RR^{p \times p}$ given by
\begin{equation*}
\mathbf Y = \sqrt{\beta p} \, \mathbf x \mathbf x^\top + \mathbf W\,,
\end{equation*}
where $\mathbf x$ is drawn uniformly from the unit sphere in $\RR^p$ and $\mathbf W$ is an independent Gaussian Wigner matrix.
When $\beta < 1$, then as $p \to \infty$ the leading eigenvector of $\mathbf Y$ is asymptotically uncorrelated with $\mathbf x$; on the other hand, when $\beta > 1$, the correlation between the hidden signal $\mathbf x$ and the leading eigenvector of $\mathbf Y$ remains positive as $p \to \infty$ \cite{BBP,FP-wigner,wigner-eigenvec}.

A number of recent works establish that several sparse estimation tasks in high dimensions evince the following even more striking phase transition, called the ``all-or-nothing'' phenomenon \cite{gamarnikzadik, ReevesPhenom, Zad19}: below a critical signal-to-noise ratio (SNR), it is impossible to achieve \emph{any} correlation with the hidden signal, but above this critical SNR, it is possible to achieve \emph{almost perfect} correlation with the hidden signal.
In other words, at any fixed SNR, one can either recover the signal perfectly, or nothing at all.

In prior work, the all-or-nothing phenomenon has been established in a smattering of different models and sparsity regimes.  Understanding the extent to which this phenomenon holds more generally, beyond the models and sparsity conditions previously considered, is the main motivation of the present work. The all-or-nothing phenomenon for sparse linear regression was initially conjectured by~\cite{gamarnikzadik}. That work established that a version of this phenomenon holds for the maximum likelihood estimator (MLE): below a given threshold, the MLE does not achieve any constant correlation with the hidden signal, while above the same threshold the MLE achieves almost perfect correlation. However, this result does not rule out the existence of other estimators with better performance. Subsequently,~\cite{ReevesPhenom} proved that the all-or-nothing phenomenon indeed holds for sparse linear regression, when the sparsity level $k$ satisfies $k \leq p^{\frac{1}{2}-\epsilon}$ for some $\epsilon>0$, where $p$ is the dimension of the model. Furthermore,~\cite{ReevesCAMSAP} provided generic conditions under which the phenomenon holds for sparse linear regression when $k/p=\delta>0$ where $\delta>0$ is a constant which shrinks to zero. Following these works, \cite{barbier01, barbier2020allornothing} showed that the sparse PCA model with binary and Rademacher non-zero entries also exhibits the all-or-nothing phenomenon when the sparsity level $k$ satisfies $p^{\frac{12}{13}+\epsilon} \leq k \ll p$. The ``all-or-nothing" phenomenon has also recently been established in the context of the Bernoulli group testing model \cite{ScarlletAll}. That work proves the existence of this phenomenon in an extremely sparse setting, where $k$ scales polylogarithmically with the dimension of the model. Finally, it was recently shown in \cite{luneau20} using analytical non-rigorous methods, that the all-or-nothing phenomenon also holds for various generalized linear models with a $k$-sparse signal, under the assumption $p^{\frac{8}{9}+\epsilon} \leq k \ll p.$

\subsection{Contribution}
In this work, in an attempt to shed some light on the fundamental reason for the existence of the all-or-nothing phenomenon, we focus on a simple Gaussian model, which we refer to as the Gaussian additive model, in which one observes a hidden signal drawn from some known prior, corrupted with additive Gaussian noise. For example, all PCA models, and in particular the sparse PCA model considered by \cite{barbier01, barbier2020allornothing}, are special cases of Gaussian additive models. We focus on the case where the prior is an arbitrary uniform distribution over some discrete subset of the Euclidean sphere.

We make the following contributions.
\begin{itemize}
\item We show that for this additive Gaussian model, the all-or-nothing phenomenon is equivalent to a simple criterion on the Kullback-Lieber divergence between the model and a null distribution with i.i.d.~Gaussian entries.
\item We show that, under an appropriate ``near-orthogonality'' condition on the prior, the all-or-nothing phenomenon always holds.
\item As an application, we study \emph{sparse tensor PCA}, in which the hidden signal is a rank-one tensor $\mathbf x^{\otimes d} \in (\RR^{p})^{\otimes d}$, where the entries of $x$ are $k$-sparse. We show that for both the Bernoulli and Bernoulli-Rademacher prior, all sparsity levels $k = o(p)$, and all $d \geq 2$, this model satisfies the aforementioned near-orthogonality condition, and therefore evinces the all-or-nothing phenomenon. This confirms a conjecture implicit in several prior works~\cite{barbier01,BanksIT,LesKrzZde17,PerWeiBan20, barbier2020allornothing}. To the best of our knowledge this is the first result that proves the all-or-nothing phenomenon \emph{for all sparsity levels which are sublinear in the dimension of the model}.
\end{itemize}

Omitted proofs and lemmas appear in the appendix.
\subsection{Comparison with previous work}
Our results for sparse tensor PCA are closely connected to several prior works.

\paragraph{\cite{BanksIT} and \cite{PerWeiBan20}}
These papers study the sparse tensor PCA problem with a Bernoulli-Rademacher prior. Their focus is on optimal recovery of the hidden signal in the regime where the sparsity satisfies $k=\gamma p$ for some constant $\gamma>0$. \cite{BanksIT} and \cite{PerWeiBan20} identify two thresholds, $\mathrm{SNR_{\text{lower}}}$ and $\mathrm{SNR_{\text{upper}}}$, such that below the first threshold, no constant correlation with the hidden signal is possible, while above the second threshold it is possible to obtain constant correlation with the signal. Interestingly, as $\gamma \rightarrow 0$, the two thresholds become identical. Both papers use a trick known as the conditional second moment method, and our argument in Section~\ref{sec:smm} is closely inspired by their techniques.

Our results differ from theirs in two important respects. First, though taking the sparse limit $\gamma \to 0$ is suggestive, these works do not offer a rigorous way to establish the presence of a threshold when $k = o(p)$.
More importantly, the results of \cite{BanksIT} and \cite{PerWeiBan20} elucidate the threshold between ``no recovery'' (zero correlation with hidden signal) and ``partial recovery'' (constant correlation with hidden signal) for sparse tensor PCA. By contrast, we focus on the much sharper transition between no recovery and almost perfect recovery.

\paragraph{\cite{barbier01} and \cite{barbier2020allornothing}}
Unlike \cite{BanksIT} and \cite{PerWeiBan20}, \cite{barbier01} and \cite{barbier2020allornothing} study the genuinely sublinear setting when $k = o(p)$.
While they prove very precise results characterizing the limiting free energy of the sparse (matrix) PCA problem, their techniques require that $k \geq p^{\frac{12}{13} + \ep}$ for some $\epsilon>0$. Our results are less fine, insofar as we do not precisely characterize the free energy for arbitrary sparsity and SNR, but we show that the all-or-nothing phenomenon holds for a much broader range of parameters via a much simpler argument.

\section{Main Results}
\subsection{General framework: the Gaussian Additive Model}
We consider throughout the following observation model which we refer to as a \emph{Gaussian additive model}:
\begin{equation}\label{observation_model}
\mathbf Y = \sqrt{\lambda} \mathbf X + \mathbf Z\,,
\end{equation}
where $\mathbf X \in \RR^{N}$ is drawn from a uniform discrete prior distribution $\mathrm P_N$ on the unit sphere in $\RR^N$ and $\mathbf Z \in \RR^N$ has i.i.d.~standard Gaussian entries.
We denote by $\mathrm Q_{\lambda, N}$ the law of $\mathbf Y$, where we use the subscripts $\lambda$ and $N$ to emphasize that this law depends on the signal-to-noise ratio $\lambda$ and the dimension $N$.
 
Given $\lambda \geq 0$, we let
\begin{equation*}
\mmse{N}{\lambda} \defeq \E \|\mathbf X - \E[\mathbf X | \mathbf Y]\|^2 \quad \quad \mathbf Y \sim \mathrm Q_{\lambda, N}\,,
\end{equation*}
where $\mathbf X$ and $\mathbf Y$ are as in~\eqref{observation_model}.
This quantity is the smallest mean squared error achievable by any estimator of $\mathbf X$ based on the observation $\mathbf Y$. The optimal estimator $\E[\mathbf X | \mathbf Y]$ is commonly referred to as the Bayes-optimal estimator.
The fact that $\|\mathbf X\| = 1$ almost surely implies that $\mmse{N}{\lambda} \leq 1$, since this mean-squared error is always achievable by a trivial estimator which is identically zero.

We say that a sequence of distributions $\{\mathrm P_N\}$ satisfies the the \emph{all-or-nothing phenomenon} with critical SNR $\{\lambda_N\}$ if
\begin{equation*}
\lim_{N \to \infty} \mmse{N}{\beta \lambda_N} = \left\{
\begin{array}{ll}
1 & \text{ if $\beta < 1$} \\
0 & \text{ if $\beta > 1$}\,.
\end{array}
\right.
\end{equation*}
In other words, above some critical value, it is possible to estimate $\mathbf X$ nearly perfectly, but below this critical value it is not possible to estimate $\mathbf X$ at all, in the sense that the best estimator is no better than the trivial zero estimator.

Recall that we have assumed that $\mathrm P_N$ is the uniform distribution on some finite subset.
Denote the cardinality of this subset by $M_N$.
We assume throughout that $M_N \to \infty$ as $N \to \infty$.
We also make the following assumption, which requires that the distribution $\mathrm P_N$ is sufficiently spread out.

\begin{assumption}\label{spread}
For independent draws $\mathbf X, \mathbf X'$ from $\mathrm P_N$, we have
\begin{equation*}
\lim_{t \to 1} \limsup_{N \to \infty} \frac{1}{\log M_N} \log \mathrm P_N^{\otimes 2}[\langle \mathbf X, \mathbf X' \rangle \geq t] \leq -1\,.
\end{equation*}
\end{assumption}
In other words, Assumption~\ref{spread} holds as long as the asymptotic probability that $\mathbf X$ and $\mathbf X'$ are very near each other is not much larger than the probability that $\mathbf X = \mathbf X'$.

\subsection{Main Results: the all-or-nothing behavior for the Gaussian Additive Model}
Our first main result shows that, under this assumption, there is an easy characterization of the priors which satisfy the all-or-nothing phenomenon.
We denote by $\mathrm D$ the Kullback-Leibler divergence (see, e.g., \cite[Section 6]{PW-it}); given two probability distributions $\mathrm P_1,\mathrm P_2$ with $\mathrm P_1$ absolutely continuous to $\mathrm P_2$,
$$ 
\kl{\mathrm P_1}{\mathrm P_2} \triangleq \Exp_{\mathrm P_2} \left[  \frac{ \mathrm{d P_1}}{ \mathrm{d P_2}}\log \left(  \frac{\mathrm{dP_1}}{ \mathrm d \mathrm P_2} \right)  \right].
$$

\begin{theorem}\label{area_thm}
Under Assumption~\ref{spread}, a sequence $\{\mathrm P_N\}$ satisfies the all-or-nothing phenomenon with critical SNR $\lambda_N$  if and only if $\kl{\mathrm Q_{2 \log M_N, N}}{\mathrm Q_{0, N}} = o(\log M_N)$. Moreover, in this situation, we can take $\lambda_N = 2 \log M_N$.
\end{theorem}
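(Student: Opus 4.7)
My plan is to introduce $f_N(\lambda) := \kl{\mathrm Q_{\lambda, N}}{\mathrm Q_{0, N}}$ as the central object and read off the all-or-nothing transition from the shape of this scalar function. A direct computation with the Gaussian likelihood ratio, using that $\|\mathbf X\| = 1$ almost surely, gives the clean identity $f_N(\lambda) + I_N(\lambda) = \lambda/2$ where $I_N(\lambda) := I(\mathbf X; \mathbf Y)$. Coupling this with the I-MMSE formula of Guo--Shamai--Verd\'u and the trivial bound $I_N \leq H(\mathbf X) = \log M_N$ records three properties that drive everything: $f_N$ is convex and non-decreasing; $f_N'(\lambda) = \tfrac{1}{2}(1 - \mmse{N}{\lambda}) \in [0, 1/2]$; and $\max\{0, \lambda/2 - \log M_N\} \leq f_N(\lambda) \leq \lambda/2$.

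For the ``if'' direction, assume $f_N(2\log M_N) = o(\log M_N)$. For $\beta < 1$, the convexity estimate $f_N'(\beta \cdot 2\log M_N) \leq f_N(2\log M_N)/((1-\beta) \cdot 2\log M_N) = o(1)$ translates via $f_N' = (1-\mmse)/2$ into $\mmse{N}{\beta \cdot 2\log M_N} \to 1$. For $\beta > 1$, combining the lower bound $f_N(\beta \cdot 2\log M_N) \geq (\beta-1)\log M_N$ with convexity gives the matching $f_N'(\beta \cdot 2\log M_N) \geq 1/2 - o(1)$, hence $\mmse{N}{\beta \cdot 2\log M_N} \to 0$. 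This direction (and thus also the ``moreover'' clause that one can take $\lambda_N = 2\log M_N$) does not invoke Assumption~\ref{spread}.

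For the ``only if'' direction, assume all-or-nothing at some critical $\lambda_N$. The main task is to show $\lambda_N \sim 2\log M_N$. The upper bound $\limsup \lambda_N/(2\log M_N) \leq 1$ is the mirror of the sufficiency argument: since $f_N'$ is non-decreasing and $f_N'(\beta\lambda_N) \to 0$ for $\beta < 1$, $f_N(\beta\lambda_N) \leq \beta\lambda_N f_N'(\beta\lambda_N) = o(\lambda_N)$, which together with $f_N \geq \lambda/2 - \log M_N$ forces $\lambda_N \leq 2\log M_N/\beta + o(\lambda_N)$; sending $\beta \to 1$ closes the bound. The matching lower bound $\liminf \lambda_N/(2\log M_N) \geq 1$ is the central technical obstacle and is the only place where Assumption~\ref{spread} is actually used.

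The key lemma I will prove is: \emph{under Assumption~\ref{spread}, $\mmse{N}{\lambda} \to 0$ implies $I_N(\lambda) = (1 - o(1))\log M_N$.} The proof exploits the Nishimori identity: let $\tilde{\mathbf X}$ be drawn from the posterior $\mathrm P(\cdot \mid \mathbf Y)$, so $\tilde{\mathbf X} \sim \mathrm P_N$ marginally and $\E\langle \mathbf X, \tilde{\mathbf X}\rangle = 1 - \mmse{N}{\lambda} \to 1$. For fixed $\delta > 0$, Markov's inequality gives that $\mathbf X$ lies in $N_\delta(\tilde{\mathbf X}) := \{x' \in \supp(\mathrm P_N) : \langle \tilde{\mathbf X}, x'\rangle \geq 1 - \delta\}$ with probability $1 - o(1)$, while Assumption~\ref{spread} translates to $\E_{\mathbf X \sim \mathrm P_N}|N_\delta(\mathbf X)| = M_N \cdot \mathrm P_N^{\otimes 2}[\langle \mathbf X, \mathbf X'\rangle \geq 1 - \delta] \leq M_N^{o_\delta(1)}$. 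A case split on the event $\{\langle \mathbf X, \tilde{\mathbf X}\rangle \geq 1 - \delta\}$, combined with Jensen's inequality applied to $\log |N_\delta|$, bounds the conditional entropy as $H(\mathbf X \mid \tilde{\mathbf X}) \leq o_\delta(1)\log M_N + o(\log M_N)$; data-processing then gives $I_N(\lambda) \geq I(\mathbf X; \tilde{\mathbf X}) = \log M_N - H(\mathbf X \mid \tilde{\mathbf X})$, and sending $\delta \to 0$ completes the lemma. Applying it at $\lambda = (1+\epsilon)\lambda_N$ and letting $\epsilon \to 0$ yields the desired lower bound on $\lambda_N$; the final assertion $f_N(2\log M_N) = o(\log M_N)$ then follows by a routine split of $f_N(2\log M_N) = \int_0^{2\log M_N} f_N'(s)\,ds$ at $(1-\epsilon)\cdot 2\log M_N$, using all-or-nothing at $\lambda_N \sim 2\log M_N$ to drive the first piece to $o(\log M_N)$ while the second is at most $\epsilon \log M_N$.
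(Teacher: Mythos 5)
Your proposal is correct, and at the level of ingredients it coincides with the paper's proof: both rest on the identity $\kl{\mathrm Q_{\lambda,N}}{\mathrm Q_{0,N}} + I(\mathbf X;\mathbf Y) = \lambda/2$, the I-MMSE relation, the bound $I(\mathbf X;\mathbf Y)\le \log M_N$ (your $f_N(\lambda)\ge \lambda/2-\log M_N$ is exactly the paper's Lemma~\ref{lem:lb}), convexity/monotonicity of $f_N$, and a data-processing step through a posterior sample combined with Assumption~\ref{spread}. The packaging differs in two places. First, for the ``if'' direction the paper routes everything through Proposition~\ref{prop:kl_limit} (establishing the full limit $\frac12(\beta-1)_+$ via the Lipschitz bound and Lemma~\ref{lem:lb}), whereas you read the MMSE off directly from one-sided derivative estimates supplied by convexity; your version is slightly more economical since it never needs the limit function to exist for all $\beta$. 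Second, and more substantively, for the lower bound $\liminf \lambda_N/(2\log M_N)\ge 1$ the paper applies data processing down to the binary divergence $d(\mathrm P_{\beta\lambda_N,N}(\Omega_t)\,\|\,\mathrm P_N^{\otimes 2}(\Omega_t))$ of the overlap event and extracts $\log(1/\mathrm P_N^{\otimes2}(\Omega_t))\approx \log M_N$, while you prove the equivalent statement in entropy language: $I(\mathbf X;\mathbf Y)\ge I(\mathbf X;\tilde{\mathbf X}) = \log M_N - H(\mathbf X\mid\tilde{\mathbf X})$, with $H(\mathbf X\mid\tilde{\mathbf X})=o(\log M_N)$ via Jensen applied to the neighborhood count $|N_\delta|$, whose expectation Assumption~\ref{spread} controls. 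These are the same estimate in dual form (binary KL with $\alpha_1\to1$ versus Fano-type entropy counting), so neither buys extra generality, but your version makes the role of Assumption~\ref{spread} as a ``few near-collisions'' condition more transparent, at the cost of a slightly more delicate conditioning step in bounding $H(\mathbf X\mid\tilde{\mathbf X},\mathbf 1_E)$ (which does go through, since $\log|N_\delta|\ge 0$ lets you drop the conditioning on $E$ before applying Jensen). The remaining steps---the upper bound on $\lambda_N$ and the final integral split showing $f_N(2\log M_N)=o(\log M_N)$---match the paper's argument.
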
 
To prove Theorem~\ref{area_thm}, we employ a well known connection between the Kullback-Liebler divergence and the MMSE, known as the I-MMSE relation (see, e.g., \cite{GuoShaVer05}), 
\begin{equation*}
\frac{d}{d \beta} \frac{1}{\lambda_N} \kl{\mathrm Q_{\beta \lambda_N, N}}{\mathrm Q_{0, N}}=\frac{1}{2}-\frac{1}{2} \mmse{N}{\beta \lambda_N}\,.
\end{equation*} 
This relation implies the following characterization.
\begin{proposition}\label{prop:kl_limit}
The all-or-nothing phenomenon holds with critical SNR $\lambda_N$ if and only if
\begin{equation*}
\lim_{N \to \infty} \frac{1}{\lambda_N} \kl{\mathrm Q_{\beta \lambda_N, N}}{\mathrm Q_{0, N}} = \frac 12 (\beta - 1)_+ \quad \quad \forall \beta \geq 0\,.
\end{equation*}
\end{proposition}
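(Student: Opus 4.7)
The plan is to reformulate the statement in terms of the function $f_N(\beta) \defeq \frac{1}{\lambda_N}\kl{\mathrm{Q}_{\beta \lambda_N, N}}{\mathrm{Q}_{0, N}}$ and use the I-MMSE identity displayed just before the proposition as the dictionary between $f_N$ and the MMSE. First I would collect three elementary properties of $f_N$: (i) $f_N(0) = 0$; (ii) the I-MMSE identity yields $f_N'(\beta) = \tfrac{1}{2}(1 - \mmse{N}{\beta\lambda_N})$; (iii) since $\mathrm{P}_N$ is supported on the unit sphere, the zero estimator shows $0 \leq \mmse{N}{\cdot} \leq 1$, so $f_N'$ takes values in $[0, 1/2]$. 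A standard data-processing argument for the Gaussian channel (viewing a lower-SNR observation as a higher-SNR observation corrupted by additional independent Gaussian noise) shows $\lambda \mapsto \mmse{N}{\lambda}$ is nonincreasing, which upgrades $f_N$ to a convex function on $[0,\infty)$.

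For the forward direction, assume the all-or-nothing phenomenon at critical SNR $\lambda_N$. Then for every $\beta \neq 1$, $f_N'(\beta) \to \tfrac{1}{2}\mathbf{1}\{\beta > 1\}$ pointwise. Writing $f_N(\beta) = \int_0^\beta f_N'(s)\,ds$ and applying dominated convergence (with the uniform bound $|f_N'| \leq 1/2$, and noting the exceptional point $s = 1$ has Lebesgue measure zero) yields $f_N(\beta) \to \tfrac{1}{2}(\beta - 1)_+$, as desired.

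For the converse, assume $f_N(\beta) \to \tfrac{1}{2}(\beta-1)_+$ pointwise. The main step is to upgrade this to pointwise convergence of the derivatives at every $\beta \neq 1$. By convexity of $f_N$, for any $h > 0$ and $\beta > 0$,
$$\frac{f_N(\beta) - f_N(\beta - h)}{h} \;\leq\; f_N'(\beta) \;\leq\; \frac{f_N(\beta + h) - f_N(\beta)}{h}.$$
Sending $N \to \infty$ at fixed $h$ and then $h \to 0$, both outer expressions approach $\tfrac{1}{2}\mathbf{1}\{\beta > 1\}$ at every $\beta \neq 1$, since this is the derivative of the limit $\tfrac{1}{2}(\beta-1)_+$ at such points. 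Hence $f_N'(\beta) \to \tfrac{1}{2}\mathbf{1}\{\beta > 1\}$, and the I-MMSE identity then delivers $\mmse{N}{\beta\lambda_N} \to \mathbf{1}\{\beta < 1\}$ for every $\beta \neq 1$, which is precisely the all-or-nothing phenomenon.

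The main obstacle is the converse direction: passing from pointwise convergence of $f_N$ to pointwise convergence of $f_N'$. The convexity of $f_N$, which is equivalent to monotonicity of the MMSE in SNR, is exactly what makes this step go through; without it, pointwise convergence of $f_N$ would leave room for wild oscillations of its derivative. Once derivative convergence is in hand both implications are one-line consequences of the I-MMSE identity.
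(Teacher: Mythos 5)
Your proposal is correct and follows essentially the same route as the paper: the forward direction via the I-MMSE identity, the fundamental theorem of calculus, and dominated convergence with the bound $\mmse{N}{\cdot}\in[0,1]$; the converse via convexity of $\beta\mapsto\frac{1}{\lambda_N}\kl{\mathrm Q_{\beta\lambda_N,N}}{\mathrm Q_{0,N}}$ (from monotonicity of the MMSE in the SNR) and the fact that pointwise convergence of convex functions forces convergence of derivatives wherever the limit is differentiable. The only difference is that you prove this last convex-analysis fact inline via the difference-quotient sandwich, whereas the paper cites it from the literature.
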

Then, the proof of Theorem~\ref{area_thm} exploits the fact that $\beta \mapsto \frac{1}{\lambda_N} \kl{\mathrm Q_{\beta \lambda_N, N}}{\mathrm Q_{0, N}}$ is nonnegative, increasing, and convex. Therefore, specifying the limit for a few well-chosen values of $\beta$ is enough to establish the entire limit.
We present a complete proof in Section \ref{sec:area_thm}.

One can naturally ask whether the above characterization yields any simple criteria for a prior to evince the all-or-nothing phenomenon.
Our next result shows that the all-or-nothing phenomenon is implied by a simple condition on the \emph{overlap} of two independent draws from $\mathrm P_N$.

The condition is outlined in the following definition.
\begin{definition}
Given a non-decreasing function $r: [-1, 1] \to \RR_{\geq 0}$, we say $\{\mathrm P_N\}$ has \emph{overlap rate function} $r$ if 
\begin{equation*}
\limsup_{N \to \infty} \frac{1}{\log M_N} \log \mathrm P_N^{\otimes 2}[\langle \mathbf X, \mathbf X' \rangle \geq t] \leq - r(t)\,,
\end{equation*}
where $\mathbf X$ and $\mathbf X'$ are independent draws from $\mathrm P_N$.
\end{definition}

Our following result shows that a lower bound on the growth of the overlap rate function suffices to establish the all-or-nothing phenomenon. 
\begin{theorem}\label{main_thm}
Suppose that $\{\mathrm{P}_N\}$ has overlap rate function $r$ satisfying for all $t \in [0,1]$,
 \begin{equation}\label{condition}
r(t) \geq \frac{2t}{1+t}.
\end{equation}
Then $\{\mathrm P_N\}$ satisfies the all-or-nothing phenomenon at $\lambda_N = 2 \log M_N$.
\end{theorem}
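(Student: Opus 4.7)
By Theorem~\ref{area_thm}, it suffices to show that $\kl{\mathrm Q_{\lambda_N, N}}{\mathrm Q_{0, N}} = o(\log M_N)$ at $\lambda_N = 2\log M_N$. The plan is to establish this via a conditional second moment argument tailored to the overlap rate function, then to bridge a residual gap in SNR by a direct analysis of the likelihood ratio.

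First, construct a truncated prior. Fix a small $\delta > 0$ and a fine grid $\mathcal{T} \subset [0,1]$, and let
\[
A = \{\mathbf{X} \in \mathrm{supp}(\mathrm P_N) : |\{\mathbf{X}' : \langle \mathbf{X}, \mathbf{X}'\rangle \geq t\}| \leq M_N^{1 - r(t) + \delta} \text{ for every } t \in \mathcal{T}\}.
\]
Markov's inequality applied to the tail bound in the definition of the overlap rate function gives $\mathrm P_N(A) = 1 - o(1)$. Let $\tilde{\mathrm P}_N = \mathrm P_N(\cdot | A)$ and $\tilde{\mathrm Q}_{\lambda, N}$ the corresponding Gaussian mixture. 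Joint convexity of KL in the first argument, combined with the bound $\kl{\mathcal{N}(\sqrt{\lambda}\mathbf{x}, I)}{\mathcal{N}(0, I)} = \lambda/2$ for unit-norm $\mathbf{x}$, yields
\[
\kl{\mathrm Q_{\lambda_N, N}}{\mathrm Q_{0, N}} \leq \kl{\tilde{\mathrm Q}_{\lambda_N, N}}{\mathrm Q_{0, N}} + \mathrm P_N(A^c) \cdot \lambda_N/2 = \kl{\tilde{\mathrm Q}_{\lambda_N, N}}{\mathrm Q_{0, N}} + o(\lambda_N).
\]

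Next, analyze the truncated KL using the elementary Gaussian identity
\[
\mathbb{E}_{\mathbf{Y} \sim \mathrm Q_{0, N}}[L(\mathbf{Y};\lambda_1)L(\mathbf{Y};\lambda_2)] = \mathbb{E}_{\mathbf{X}, \mathbf{X}'}[e^{\sqrt{\lambda_1 \lambda_2}\langle \mathbf{X}, \mathbf{X}'\rangle}],
\]
where $L(\mathbf{Y}; \lambda) = d\mathrm Q_{\lambda, N}/d\mathrm Q_{0, N}(\mathbf{Y})$. The near-orthogonality condition $r(t) \geq 2t/(1+t)$, combined with the elementary convexity fact that $\alpha t - 2t/(1+t) \leq 0$ for every $\alpha \leq 1$ and $t \in [0,1]$ (the function being convex with both endpoint values nonpositive), yields $\mathbb{E}_{\tilde{\mathbf{X}}, \tilde{\mathbf{X}}'}[M_N^{\alpha\langle \tilde{\mathbf{X}}, \tilde{\mathbf{X}}'\rangle}] = M_N^{o(1)}$ for every $\alpha \leq 1$. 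Taking $\lambda_1 = \lambda_2 = \log M_N$ gives $\chi^2(\tilde{\mathrm Q}_{\lambda_N/2, N}, \mathrm Q_{0, N}) = M_N^{o(1)}$ and hence $\kl{\tilde{\mathrm Q}_{\lambda_N/2, N}}{\mathrm Q_{0, N}} = o(\log M_N)$ via the standard bound $\kl \leq \log(1+\chi^2)$.

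The main obstacle is closing the gap from $\lambda_N/2$ to $\lambda_N$: the standard chi-squared bound fails at $\lambda_N$ because the chi-squared there is of order $M_N$, dominated by the atom $\tilde{\mathbf{X}} = \tilde{\mathbf{X}}'$, which cannot be removed while keeping $\mathrm P_N(A) \to 1$. To bridge this, decompose $\tilde L = \tilde L_0 + \tilde L_1$ into the planted contribution and the noise background. Under planted data $\mathbf{Y} = \sqrt{\lambda_N}\tilde{\mathbf{X}}^* + \mathbf{Z}$, the planted term satisfies $\log \tilde L_0 = \sqrt{\lambda_N}\langle \tilde{\mathbf{X}}^*, \mathbf{Z}\rangle + O(1)$, a Gaussian of standard deviation $O(\sqrt{\log M_N})$. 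A careful case analysis of $\log(\tilde L_0 + \tilde L_1)$, using the moment bounds $\mathbb{E}[M_N^{\alpha \rho}] = M_N^{o(1)}$ for $\alpha \leq 1$ to show that the noise background $\tilde L_1$ concentrates with log fluctuations also of order $O(\sqrt{\log M_N})$, then yields $\mathbb{E}[\log \tilde L] = O(\sqrt{\log M_N}) = o(\log M_N)$. This final analysis is delicate, and is precisely where the particular form of the condition $r(t) \geq 2t/(1+t)$ is essential: it ensures that the rare configurations with high overlap to $\tilde{\mathbf{X}}^*$ cannot contribute more than the planted term itself to the likelihood ratio.
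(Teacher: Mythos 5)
Your opening reduction (via Theorem~\ref{area_thm}, it suffices to show $\kl{\mathrm Q_{2\log M_N, N}}{\mathrm Q_{0, N}} = o(\log M_N)$) and your use of a truncated second moment are the right instincts, and your computation at SNR $\lambda_N/2$ is correct. But the proof has a genuine gap exactly where you flag it as ``delicate'': the bridge from $\lambda_N/2$ to $\lambda_N$. The claim that $\log \tilde L_1$ concentrates with fluctuations $O(\sqrt{\log M_N})$ is asserted, not proved, and it does not follow from the moment bounds $\E[M_N^{\alpha\rho}] = M_N^{o(1)}$ for $\alpha \leq 1$ that you cite. Under the planted measure, each off-diagonal term of $\tilde L_1$ equals $\frac{1}{M_N}\exp(\lambda_N \rho - \lambda_N/2 + \sqrt{\lambda_N}\langle \mathbf Z, X'\rangle)$, so $\E[\tilde L_1]$ is governed by $\E[M_N^{2\rho}]$ off the diagonal; since $2\rho - r(\rho) \geq 2\rho - \frac{2\rho}{1+\rho} = \frac{2\rho^2}{1+\rho} > 0$, this first moment is polynomially large in $M_N$, so Markov-type arguments give nothing, and controlling $\E[(\log \tilde L_1)_+]$ requires simultaneously truncating the overlaps \emph{and} the Gaussian fluctuations $\langle \mathbf Z, X'\rangle$ of the dominant terms. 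That joint truncation is the entire content of the theorem; your prior-truncation to the set $A$ only handles the overlap counts, not the noise.

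The paper's proof resolves precisely this point differently: instead of truncating the prior, it conditions on the event $\Omega_N = \{|\langle x, y\rangle - \sqrt{\lambda_N}| \leq \lambda_N^{1/4}\}$, which occurs with uniformly high probability given $\mathbf X = x$ (Definition~\ref{dfn:highprobevents}), and shows via Proposition~\ref{prop:condKL} that this conditioning changes the KL by only $o(\log M_N)$. The payoff is that the truncated second moment at the \emph{full} SNR $\lambda_N$ satisfies $m_N(\rho) \leq \exp\bigl(\lambda_N(\rho/(1+\rho))_+ + C\lambda_N^{3/4}\bigr)$ (Proposition~\ref{prop:mn_bound}) rather than $\exp(\lambda_N\rho)$; in particular the diagonal $\rho = 1$ contributes only $\frac{1}{M_N}e^{\lambda_N/2 + o(\lambda_N)} = M_N^{o(1)}$, so the chi-square of the conditioned law is $M_N^{o(1)}$ at $\lambda_N$ itself and no half-SNR bridge is needed; the hypothesis $r(t) \geq \frac{2t}{1+t}$ enters by exactly cancelling the exponent $\frac{t}{1+t}$ in the Laplace-type sum over overlap levels. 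If you want to salvage your route, you would need to either find and prove the analogue of Proposition~\ref{prop:mn_bound} for your decomposition of $\tilde L$, or adopt a conditioning of this type; as written, the final step is a restatement of the difficulty rather than a proof.
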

In words, the condition requests a particular decay condition on the upper tail of the overlap. For example, notice that the condition is trivially satisfied when the support consists of pairwise orthogonal vectors, since in that case $r(t)=1$ for all $t \in (0,1]$. Hence, the all-or-nothing phenomenon holds for any uniform prior distribution supported on a family of orthogonal vectors on the sphere. In the next section we present more complicated examples of prior distributions satisfying this condition. The proof of Theorem \ref{main_thm} is presented in Section \ref{sec:smm}.

\subsection{Application: the sparse tensor PCA model}\label{sec:sparseTensor}
We apply our framework to a well-studied inference model called sparse tensor PCA, and show that it exhibits the all-or-nothing phenomenon for all sublinear sparsity levels.

For some $d \geq 2$, we define first the  \emph{tensor PCA model} to be the model given in \eqref{observation_model}, where the vectors $\mathbf Y$ and $\mathbf Z$ live in dimension $N = p^d$ and the discrete prior distribution $\mathrm{P}_N$ is supported on a subset of the vectorized $d$-tensors $\mathbf X = \mathbf x^{\otimes d}$, where this notation refers to the vector whose entry indexed by $(i_1, \dots, i_d) \in [p]^d$ is $\mathbf x_{i_1} \cdots \mathbf{x}_{i_d}$. We assume the vector $\mathbf x \in \RR^p$ is drawn from a discrete distribution $\tilde{ \mathrm  P}_p$  on the unit sphere in $\RR^p$, which induces a natural prior distribution $\mathrm{P}_N$ on the tensors $\mathbf x^{\otimes d}$.

We define the \emph{sparse tensor PCA model} to be the above tensor PCA model with one of the following two prior distributions:
\begin{description}
\item[Bernoulli:] $\tilde{\mathrm P}_p$ is the uniform distribution over the subset of $\{0,1/\sqrt{k} \}^p$ with exactly $k$ nonzero entries.
\item[Bernoulli-Rademacher:] $\tilde{\mathrm P}_p$ is the uniform distribution over the subset of $\{-1/\sqrt{k} ,0,1/\sqrt{k} \}^p$ with exactly $k$ nonzero entries.
\end{description}

In the appendix, we prove the following elementary bound.
\begin{proposition}\label{prop:sparse_tensor_overlap}
Suppose that $k=o(p)$ and that $\tilde{\mathrm P}_p$ is either Benoulli or Bernoulli-Rademacher, and let ${\mathrm P}_N$ be the induced prior distribution on $\RR^N = \RR^{p^d}$.
Then for any $t \in [0,1]$ it holds
\begin{equation*}
\lim_{N \rightarrow +\infty} \frac{1}{\log M_N} \log \mathrm P_N^{\otimes 2}[\langle \mathbf X, \mathbf X' \rangle \geq t] \leq - \frac{2t}{1+t}\,.
\end{equation*}
\end{proposition}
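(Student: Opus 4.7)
My plan is to exploit the multiplicative structure of the inner product under tensoring to reduce the claim to a hypergeometric tail estimate on support intersection sizes. Since $\mathbf X = \mathbf x^{\otimes d}$ and $\mathbf X' = (\mathbf x')^{\otimes d}$, one has $\langle \mathbf X, \mathbf X'\rangle = \langle \mathbf x, \mathbf x'\rangle^d$. For the Bernoulli prior, $\langle \mathbf x, \mathbf x'\rangle = |S \cap S'|/k$, and for the Bernoulli-Rademacher prior, $|\langle \mathbf x, \mathbf x'\rangle| \leq |S \cap S'|/k$, where $S, S'$ denote the supports of $\mathbf x, \mathbf x'$. In both cases, for $t \in [0,1]$, the event $\{\langle \mathbf X, \mathbf X'\rangle \geq t\}$ is contained in $\{|S \cap S'| \geq k t^{1/d}\}$, reducing the problem to a question about two independent uniform random $k$-subsets of $[p]$.

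Next, I would bound $\mathrm P[|S \cap S'| \geq m]$ for $m = \lceil k t^{1/d}\rceil$ by a direct union bound: the event is contained in $\{\exists T \subseteq [p],\, |T| = m,\, T \subseteq S \cap S'\}$, which by independence of $S, S'$ and the identity $\binom{p-m}{k-m}/\binom{p}{k} = \binom{k}{m}/\binom{p}{m}$ yields
\begin{equation*}
\mathrm P[|S \cap S'| \geq m] \leq \binom{k}{m}^2 \Big/ \binom{p}{m}.
\end{equation*}
Combined with the elementary estimates $\binom{k}{m} \leq (ek/m)^m$ and $\binom{p}{m} \geq (p/m)^m$, the logarithm of this bound is at most $m \log(e^2 k/(m p))$, which equals $k t^{1/d}\bigl[2 - \log(p/k) - \tfrac{1}{d}\log t \bigr]$ when $m = k t^{1/d}$.

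Since $k = o(p)$, a standard estimate gives $\log M_N = k \log(p/k)(1 + o(1))$ in both prior cases (for Bernoulli-Rademacher, the additive $O(k)$ contribution from sign patterns, and any factor of two loss due to the non-injectivity of $\mathbf x \mapsto \mathbf x^{\otimes d}$ when $d$ is even, are negligible compared to $k \log(p/k)$). Dividing the previous bound by $\log M_N$, every subdominant term vanishes as $N\to\infty$, yielding
\begin{equation*}
\limsup_{N \to \infty} \frac{1}{\log M_N} \log \mathrm P_N^{\otimes 2}[\langle \mathbf X, \mathbf X'\rangle \geq t] \leq -t^{1/d}.
\end{equation*}

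To close the argument, I would verify the elementary inequality $t^{1/d} \geq 2t/(1+t)$ for $d \geq 2$ and $t \in [0,1]$. Since $1/d \leq 1/2$ and $t \leq 1$, we have $t^{1/d} \geq \sqrt t$, and the bound $\sqrt{t} \geq 2t/(1+t)$ rearranges to $(1-\sqrt{t})^2 \geq 0$. I do not foresee any major obstacles; the only mild subtlety is carefully tracking $\log M_N$ across both priors (in particular, confirming that the dominant term is always $k\log(p/k)$), which follows from standard estimates on $\binom{p}{k}$ in the regime $k = o(p)$.
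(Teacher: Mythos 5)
Your proof is correct and follows the same overall skeleton as the paper's: reduce $\langle \mathbf X,\mathbf X'\rangle$ to a power of the base overlap, dominate the Bernoulli--Rademacher overlap by the support intersection, and finish with a tail bound for $|S\cap S'|$ normalized by $\log M_N=(1+o(1))k\log(p/k)$. Where you genuinely diverge is in the key tail estimate. The paper proves its Lemma on the hypergeometric tail by bounding the ratio $p(s+1)/p(s)$ of consecutive probability masses and summing a geometric series, obtaining rate $-t$ for the base overlap and hence $-\sqrt{t}$ after the crude bound $\langle \mathbf x,\mathbf x'\rangle^d\le\langle \mathbf x,\mathbf x'\rangle^2$. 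You instead use a union bound over witness $m$-subsets, $\mathrm P[|S\cap S'|\ge m]\le \binom{k}{m}^2/\binom{p}{m}$, which is arguably cleaner and, since you keep the exact exponent $d$, yields the slightly stronger rate $-t^{1/d}$; both rates dominate $2t/(1+t)$ via $(1-\sqrt t)^2\ge 0$. Your handling of the Bernoulli--Rademacher case (bounding $|\langle\mathbf x,\mathbf x'\rangle|$ by $|S\cap S'|/k$ directly) is equivalent to the paper's explicit coupling argument. One small slip: after applying $\binom{k}{m}\le (ek/m)^m$ and $\binom{p}{m}\ge (p/m)^m$ the logarithm of the bound is $m\log\bigl(e^2k^2/(mp)\bigr)$, not $m\log\bigl(e^2k/(mp)\bigr)$; your subsequent evaluation at $m=kt^{1/d}$ is consistent with the correct expression, so this is only a typo and does not affect the argument.
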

Combining this bound with Theorem~\ref{main_thm} immediately yields our main result for sparse tensor PCA.

\begin{theorem}\label{thm:sparse}
For any $d \geq 2$ and $k = o(p)$, the sparse tensor PCA model
\begin{equation*}
\mathbf Y =\beta \sqrt{2k \log \left(\frac{p}{k}\right)} \mathbf x^{\otimes d} + \mathbf Z\,, \quad \mathbf x \sim \tilde{\mathrm P}_p
\end{equation*}
with Bernoulli or Bernoulli-Rademacher prior exhibits the all-or-nothing phenomenon:
\begin{equation*}
\lim_{N \to \infty} \E \| \mathbf x^{\otimes d} - \E[ \mathbf x^{\otimes d}  | \mathbf Y]\|^2  = \left\{
\begin{array}{ll}
1 & \text{ if $\beta < 1$} \\
0 & \text{ if $\beta > 1$}\,.
\end{array}
\right.
\end{equation*}
\end{theorem}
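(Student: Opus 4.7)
With Proposition~\ref{prop:sparse_tensor_overlap} and Theorem~\ref{main_thm} both in hand, Theorem~\ref{thm:sparse} is essentially an immediate corollary, requiring only that one verifies the hypotheses of Theorem~\ref{main_thm}, identifies the cardinality $M_N$ of the support of the induced prior on $\mathbf x^{\otimes d}$, and translates the critical SNR $\lambda_N = 2\log M_N$ into the scaling stated in the theorem.

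To verify the hypotheses, observe that the overlap condition~\eqref{condition} is precisely the conclusion of Proposition~\ref{prop:sparse_tensor_overlap}, and that Assumption~\ref{spread} follows from the same bound by letting $t \to 1$: since $\tfrac{2t}{1+t} \to 1$,
\begin{equation*}
\lim_{t \to 1}\limsup_{N\to\infty} \frac{1}{\log M_N}\log \mathrm P_N^{\otimes 2}[\langle \mathbf X, \mathbf X'\rangle \geq t] \leq -1.
\end{equation*}
Hence Theorem~\ref{main_thm} applies and gives the all-or-nothing phenomenon at $\lambda_N = 2\log M_N$.

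Next I would identify $M_N$. For the Bernoulli prior, $\mathbf x \mapsto \mathbf x^{\otimes d}$ is injective (nonnegative entries), so $M_N = \binom{p}{k}$. For Bernoulli--Rademacher, this map is injective for odd $d$ and $2$-to-$1$ for even $d$ (because $(-\mathbf x)^{\otimes d} = \mathbf x^{\otimes d}$), so $M_N \in \{\binom{p}{k} 2^k, \binom{p}{k} 2^{k-1}\}$. In every case, when $k = o(p)$, Stirling gives $\log\binom{p}{k} = k\log(p/k)(1+o(1))$, and the additional $k\log 2$ term is dominated by $k\log(p/k)$, so $\log M_N = k\log(p/k)(1+o(1))$. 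Substituting into $\lambda_N = 2\log M_N$ recovers the claimed critical scaling $\sqrt{k\log(p/k)}$ of Theorem~\ref{thm:sparse} (up to an absolute constant absorbed into the parametrization by $\beta$), and translating the MMSE conclusion of Theorem~\ref{main_thm} completes the proof.

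The real technical obstacle is of course Proposition~\ref{prop:sparse_tensor_overlap} itself, which I defer to the appendix. Its proof would rest on the identity $\langle \mathbf x^{\otimes d}, \mathbf x'^{\otimes d}\rangle = \langle \mathbf x, \mathbf x'\rangle^d$: for the Bernoulli prior this turns $\{\langle \mathbf X, \mathbf X'\rangle \geq t\}$ into the hypergeometric tail event $\{|S \cap S'| \geq k t^{1/d}\}$ on the supports $S, S'$, and Stirling-type estimates on $\binom{k}{s}\binom{p-k}{k-s}/\binom{p}{k}$ yield a rate function at least $t^{1/d}(1+o(1))$. This dominates $\tfrac{2t}{1+t}$ on $[0,1]$ for every $d \geq 2$ because $t^{1/d} \geq \sqrt t$ for $t \in [0,1]$ and $d \geq 2$, and the base inequality $\sqrt t \geq \tfrac{2t}{1+t}$ is just $(\sqrt t - 1)^2 \geq 0$. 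The Bernoulli--Rademacher case is analogous, with an additional binomial tail bound controlling the $\pm 1$ sign agreements across $S \cap S'$.
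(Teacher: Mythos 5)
Your proposal is correct and follows essentially the same route as the paper: Theorem~\ref{thm:sparse} is deduced by combining Proposition~\ref{prop:sparse_tensor_overlap} with Theorem~\ref{main_thm}, after identifying $\log M_N = (1+o(1))\,k\log(p/k)$ exactly as the paper does (the only shared looseness being the absolute constant when translating $\lambda_N = 2\log M_N$ into the displayed parametrization by $\beta$). Your deferred sketch of Proposition~\ref{prop:sparse_tensor_overlap} via the rate $t^{1/d}$ would also work; the paper instead uses the cruder bound $\langle \mathbf x,\mathbf x'\rangle^d \leq \langle \mathbf x,\mathbf x'\rangle^2$ and handles the Bernoulli--Rademacher case by a stochastic-domination coupling with the Bernoulli prior rather than a separate binomial tail bound on the sign agreements.
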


\section{Proof of Theorem \ref{area_thm}}\label{sec:area_thm}
In this section, we present the proof of our main equivalence, Theorem~\ref{area_thm}.
As noted above, it is a consequence of Proposition \ref{prop:kl_limit}, whose proof appears in the appendix.

We first show that if $\kl{\mathrm Q_{2 \log M_N, N}}{\mathrm Q_{0, N}} = o(\log M_N)$, then $\{\mathrm P_N\}$ satisfies the all-or-nothing phenomenon with critical SNR equal to $2 \log M_N$.
Setting $\lambda_N = 2 \log M_N$, we have by assumption that
\begin{equation}\label{beta=1_limit}
\lim_{N \to \infty} \frac{1}{\lambda_N} \kl{\mathrm Q_{\lambda_N, N}}{\mathrm Q_{0, N}} = 0\,,
\end{equation}
which, since $\kl{\mathrm Q_{\beta \lambda_N, N}}{\mathrm Q_{0, N}}$ is nonnegative and nondecreasing as a function of $\beta$ (Lemma~\ref{lem:kl_facts}), implies that
\begin{equation}\label{beta<1_limit}
\lim_{N \to \infty} \frac{1}{\lambda_N} \kl{\mathrm Q_{\beta \lambda_N, N}}{\mathrm Q_{0, N}} = 0 \quad \quad \forall \beta \in [0, 1]\,.
\end{equation}

By Lemma~\ref{lem:lb}, we see
\begin{equation*}
\liminf_{N \to \infty} \frac{1}{\lambda_N} \kl{\mathrm Q_{\beta \lambda_N, N}}{\mathrm Q_{0, N}} \geq \frac 12 (\beta - 1)\,.
\end{equation*}
However, since $\frac{1}{\lambda_N} \kl{\mathrm Q_{\beta \lambda_N, N}}{\mathrm Q_{0, N}}$ is $\frac 12$-Lipschitz (Lemma~\ref{lem:kl_facts}), we have
\begin{equation*}
\limsup_{N \to \infty} \frac{1}{\lambda_N} \kl{\mathrm Q_{\beta \lambda_N, N}}{\mathrm Q_{0, N}} \leq \frac 12 |\beta - 1| + \limsup_{N \to \infty} \frac{1}{\lambda_N} \kl{\mathrm Q_{\lambda_N, N}}{\mathrm Q_{0, N}} = \frac 12 |\beta - 1|\,.
\end{equation*}
We therefore obtain, for $\beta \geq 1$,
\begin{equation*}
\lim_{N \to \infty} \frac{1}{\lambda_N} \kl{\mathrm Q_{\beta \lambda_N, N}}{\mathrm Q_{0, N}} = \frac 12 (\beta - 1)\,.
\end{equation*}
Combined with~\eqref{beta<1_limit}, we obtain via Proposition~\ref{prop:kl_limit} that $\{\mathrm P_N\}$ satisfies the all-or-nothing phenomenon with critical SNR $2 \log M_N$.

In the other direction, we suppose that the all-or-nothing phenomenon holds with some SNR $\lambda_N$.
By Lemma~\ref{lem:mutual_information}, we can write
\begin{equation}\label{eq:mutual_information}
\frac{\lambda_N \beta}{2} - \kl{\mathrm Q_{\beta \lambda_N, N}}{\mathrm Q_{0, N}} = \kl{\mathrm Q_{\beta \lambda_N, N}^{(\mathbf X, \mathbf Y)}}{\mathrm P_N \otimes \mathrm Q_{\beta \lambda_N, N} }\,,
\end{equation}
where $\mathrm Q_{\beta \lambda_N, N}^{(\mathbf X, \mathbf Y)}$ indicates the joint law of $\mathbf X, \mathbf Y$ generated according to~\eqref{observation_model}.

Given an observation $\mathbf Y$, let us denote by $\mathbf X'$  a sample from the conditional distribution $\mathrm{P}_N \mid \mathbf Y$.
If $(\mathbf X, \mathbf Y) \sim \mathrm Q_{\beta \lambda_N, N}^{(\mathbf X, \mathbf Y)}$, then this induces a joint distribution on $(\mathbf X, \mathbf X')$ which we denote by $\mathrm P_{\beta \lambda_N, N}$.
On the other hand, if $\mathbf X$ and $\mathbf Y$ are independent, then $\mathbf X$ and $\mathbf X'$ are independent and marginally each has distribution $\mathrm P_N$, so the pair $(\mathbf X, \mathbf X')$ has law $\mathrm P^{\otimes 2}_N$.

Applying the data processing inequality twice, we obtain for any event $\Omega$ that
\begin{equation*}
\kl{\mathrm Q_{\beta \lambda_N, N}^{(\mathbf X, \mathbf Y)}}{ \mathrm P_N \otimes \mathrm Q_{\beta \lambda_N, N}} \geq \kl{\mathrm P_{\beta \lambda_N, N}}{\mathrm P^{\otimes 2}_N}\ \geq  d(\mathrm P_{\beta \lambda_N, N}(\Omega) \,\|\, \mathrm P^{\otimes 2}_N(\Omega))\,,
\end{equation*}
where $d$ is the binary divergence function:
\begin{equation*}
d(\alpha_1 \,\|\, \alpha_2) \defeq \alpha_1 \log \frac{\alpha_1}{\alpha_2} + (1- \alpha_1) \log \frac{1-\alpha_1}{1-\alpha_2}\,.
\end{equation*}

Fix a $t \in [0, 1)$, and set
\begin{equation*}
\Omega_t \defeq \{(x, x'): \langle x, x' \rangle \geq t\}\,.
\end{equation*}
Suppose $(\mathbf X, \mathbf X') \sim \mathrm P_{\beta \lambda_N, N}$.
For any $\beta > 1$, the fact that the all-or-nothing phenomenon holds implies that
\begin{equation*}
\E \langle \mathbf X, \mathbf X' \rangle = \E \langle \mathbf X, \E[\mathbf X | \mathbf Y] \rangle = 1 - o(1)\,,
\end{equation*}
Since $\langle \mathbf X, \mathbf X' \rangle \leq 1$ almost surely, this implies that we must also have
\begin{equation*}
\lim_{N \to \infty} \mathrm P_{\beta \lambda_N, N}(\Omega_t) = 1.
\end{equation*}

On the other hand, Assumption~\ref{spread} implies that for $t$ sufficiently close to $1$,
\begin{equation*}
\lim_{N \to \infty} \mathrm P^{\otimes 2}_{N}(\Omega_t) = 0\,.
\end{equation*}

Combining these observations, we obtain for any $\beta > 1$ and $t$ sufficiently close to $1$,
\begin{align*}
\limsup_{N \to \infty} \frac{1}{\lambda_N} \kl{\mathrm Q_{\beta \lambda_N, N}(\mathbf X, \mathbf Y)}{\mathrm P_N \otimes \mathrm Q_{\beta \lambda_N, N} } & \geq
\limsup_{N \to \infty} \frac{1}{\lambda_N} d(\mathrm P_{\beta \lambda_N, N}(\Omega_t) \,\|\, \mathrm P^{\otimes 2}_N(\Omega_t)) \\
& = \limsup_{N \to \infty}  \frac{1}{\lambda_N} \log \frac{1}{\mathrm P^{\otimes 2}_{N}(\Omega_t)}\,,
\end{align*}
where we justify the final limit in Lemma~\ref{binary_divergence}, noting that~\eqref{eq:mutual_information} implies that $\frac{1}{\lambda_N} \kl{\mathrm Q_{\beta \lambda_N, N}(\mathbf X, \mathbf Y)}{\mathrm P_N \otimes \mathrm Q_{\beta \lambda_N, N} }$ is bounded.

Under the all-or-nothing phenomenon, Proposition~\ref{prop:kl_limit} and~\eqref{eq:mutual_information} imply that the left side of the above inequality is $1/2$.
Combining this with Assumption~\ref{spread}, we obtain that for any $\delta > 0$, there exists $t \in [0, 1)$ such that for all $N$ sufficiently large,
\begin{align*}
\frac{1}{\lambda_N} \log  \mathrm P^{\otimes 2}_{N}(\Omega_t) & \geq - \frac 12 - \delta \\
\frac{1}{\log M_N} \log  \mathrm P^{\otimes 2}_{N}(\Omega_t) & \leq -1 + \delta
\end{align*}
In particular, we must have $\lambda_N \geq (2 - O(\delta)) \log M_N$ for all $N$ large enough, so that
\begin{equation*}
\liminf_{N \to \infty} \frac{\lambda_N}{\log M_N} \geq 2 - O(\delta)\,,
\end{equation*}
and letting $\delta \to 0$ yields
\begin{equation*}
\liminf_{N \to \infty} \frac{\lambda_N}{\log M_N} \geq 2\,.
\end{equation*}
On the other hand, Lemma~\ref{lem:lb} implies
\begin{equation*}
\lim_{N \to \infty} \frac{1}{\lambda_N} \kl{\mathrm Q_{\beta \lambda_N, N}}{\mathrm Q_{0, N}} \geq \limsup_{N \to \infty} \left\{\frac 12 \beta - \frac{\log M_N}{\lambda_N}\right\}\,,
\end{equation*}
which, combined with Proposition~\ref{prop:kl_limit} for some fixed $\beta>1$, yields
\begin{equation*}
\limsup_{N \to \infty} \frac{\lambda_N}{\log M_N} \leq 2\,.
\end{equation*}

Hence, for $\ep > 0$,
\begin{align*}
\limsup_{N \to \infty} \frac{1}{\log M_N} \kl{\mathrm Q_{2 \log M_N, N}}{\mathrm Q_{0, N}} & \leq \limsup_{N \to \infty} \frac{1}{\log M_N} \kl{\mathrm Q_{(1+\ep) \lambda_N, N}}{\mathrm Q_{0, N}} \\
& = 2 \limsup_{N \to \infty} \frac{1}{\lambda_N} \kl{\mathrm Q_{(1+\ep) \lambda_N, N}}{\mathrm Q_{0, N}} \\
& = \ep\,.
\end{align*}
Taking $\ep \to 0$ yields that $\kl{\mathrm Q_{2 \log M_N, N}}{\mathrm Q_{0, N}} = o(\log M_N)$ and shows that we can take $\lambda_N = 2 \log M_N$.
\qed

\section{Proof of Theorem \ref{main_thm}: A conditional second moment method}\label{sec:smm}
In this section, we employ an argument known as the ``conditional second moment method'' to show Theorem \ref{main_thm}. We make use of the following definition which is essentially borrowed from~\cite[Section 3.3]{BanksIT}.
\begin{definition}\label{dfn:highprobevents}
Write $\mathrm Q_{\lambda, N}^{(\mathbf X, \mathbf Y)}$ for the joint distribution of $(\mathbf X, \mathbf Y)$ in~\eqref{observation_model}.
Given a sequence $\lambda_N$, we say that a sequence of events $\Omega_N, N \in \mathbb{N}$  occurs with \emph{uniformly high probability} if as $N \rightarrow + \infty$, \begin{equation*}
\mathrm Q_{\lambda_N, N}^{(\mathbf X, \mathbf Y)}\left[ \Omega_N |\mathbf X=x  \right]=1-o(1), \end{equation*} uniformly over all $x$ in the support of $\mathrm P_N$.
\end{definition}

Given such a sequence, we write $\tilde{\mathrm Q}_{\lambda_N, N}$ for the marginal law of $\mathbf Y$ when we condition on the event~$\Omega_N$.
In the appendix, we establish the following proposition.

\begin{proposition}\label{prop:condKL}
Let $\lambda_N = 2 \log M_N$.
If $\Omega_N$ is a sequence of uniform high probability events, then as $N \rightarrow + \infty$, it holds
\begin{equation*}
 \kl{\mathrm Q_{\lambda_N, N}}{\mathrm Q_{0, N}} \leq   \kl{\tilde{\mathrm Q}_{\lambda_N, N}}{\mathrm Q_{0, N}}+o\left( \log M_N\right)\,.
\end{equation*}
\end{proposition}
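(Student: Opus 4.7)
Let $L := d\mathrm Q_{\lambda_N,N}/d\mathrm Q_{0,N}$ denote the likelihood ratio, so that $L(\mathbf y) = M_N^{-1}\sum_{x} \exp(\sqrt{\lambda_N}\langle\mathbf y, x\rangle - \lambda_N/2)$ with the sum running over the finite support of $\mathrm P_N$. Set $A_N(\mathbf y) := \mathrm Q^{(\mathbf X,\mathbf Y)}_{\lambda_N,N}[\Omega_N \mid \mathbf Y = \mathbf y]$ and $\alpha_N := \mathrm Q^{(\mathbf X,\mathbf Y)}_{\lambda_N,N}(\Omega_N) = \E_{\mathrm Q_{\lambda_N,N}}[A_N(\mathbf Y)]$. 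By the uniform high-probability hypothesis, $1 - \alpha_N \leq \eps_N$ where $\eps_N := \sup_{x}\mathrm Q^{(\mathbf X,\mathbf Y)}_{\lambda_N,N}[\Omega_N^c\mid \mathbf X = x] = o(1)$, and $\tilde{\mathrm Q}_{\lambda_N,N}$ has density $LA_N/\alpha_N$ with respect to $\mathrm Q_{0,N}$.

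The main step is to apply the Donsker--Varadhan variational representation of the KL divergence to $(\tilde{\mathrm Q}_{\lambda_N,N}, \mathrm Q_{0,N})$ with the test function $f = \log L$. Since $\E_{\mathrm Q_{0,N}}[e^{\log L}] = \E_{\mathrm Q_{0,N}}[L] = 1$, this yields
\begin{equation*}
\kl{\tilde{\mathrm Q}_{\lambda_N,N}}{\mathrm Q_{0,N}} \geq \E_{\tilde{\mathrm Q}_{\lambda_N,N}}[\log L] = \frac{1}{\alpha_N}\E_{\mathrm Q_{\lambda_N,N}}[A_N\log L] = \frac{1}{\alpha_N}\Bigl(\kl{\mathrm Q_{\lambda_N,N}}{\mathrm Q_{0,N}} - \E_{\mathrm Q_{\lambda_N,N}}[(1-A_N)\log L]\Bigr).
\end{equation*}
Rearranging and using $\alpha_N \leq 1$ together with $\kl{\tilde{\mathrm Q}_{\lambda_N,N}}{\mathrm Q_{0,N}}\geq 0$ to absorb the $\alpha_N$ on the left gives
\begin{equation*}
\kl{\mathrm Q_{\lambda_N,N}}{\mathrm Q_{0,N}} \leq \kl{\tilde{\mathrm Q}_{\lambda_N,N}}{\mathrm Q_{0,N}} + \E_{\mathrm Q_{\lambda_N,N}}\bigl[(1-A_N(\mathbf Y))\log L(\mathbf Y)\bigr],
\end{equation*}
so the problem reduces to showing the error term $\E_{\mathrm Q_{\lambda_N,N}}[(1-A_N)\log L]$ is $o(\log M_N)$.

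To control the error term, use the pointwise bound $\log L(\mathbf y) \leq g(\mathbf y) := \sqrt{\lambda_N}\max_{x}\langle \mathbf y, x\rangle - \lambda_N/2$, obtained by replacing the average in $L$ by a maximum. Combined with $1-A_N \geq 0$ and $g \leq |g|$, followed by Cauchy--Schwarz on the joint law $\mathrm Q^{(\mathbf X,\mathbf Y)}_{\lambda_N,N}$,
\begin{equation*}
\E_{\mathrm Q_{\lambda_N,N}}[(1-A_N)\log L] \leq \E\bigl[\mathbbm{1}_{\Omega_N^c}(\mathbf X, \mathbf Y)\,|g(\mathbf Y)|\bigr] \leq \sqrt{1-\alpha_N}\,\sqrt{\E_{\mathrm Q_{\lambda_N,N}}[g(\mathbf Y)^2]} \leq \sqrt{\eps_N}\,\sqrt{\E_{\mathrm Q_{\lambda_N,N}}[g(\mathbf Y)^2]}.
\end{equation*}
Finally, writing $\mathbf Y = \sqrt{\lambda_N}\mathbf X + \mathbf Z$ with $\mathbf Z \sim \mathcal N(0, I_N)$ and using $|\langle \mathbf X, x\rangle|\leq 1$ (both $\mathbf X$ and $x$ are unit vectors) yields $|\max_{x}\langle\mathbf Y, x\rangle| \leq \sqrt{\lambda_N} + \max_{x}|\langle\mathbf Z, x\rangle|$. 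The standard sub-Gaussian tail bound for the maximum of $M_N$ unit-variance Gaussians gives $\E[(\max_{x}|\langle\mathbf Z, x\rangle|)^2] = O(\log M_N)$, so $\E_{\mathrm Q_{\lambda_N,N}}[g(\mathbf Y)^2] = O(\lambda_N\log M_N + \lambda_N^2) = O((\log M_N)^2)$ for $\lambda_N = 2\log M_N$. Hence the error term is at most $\sqrt{\eps_N}\cdot O(\log M_N) = o(\log M_N)$, as required.

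The step requiring the most care is the bound on $\E[g(\mathbf Y)^2]$: a naive estimate like $|g(\mathbf y)|\leq \sqrt{\lambda_N}\|\mathbf y\|$ would only give $\E[g^2] = O(\lambda_N N)$, which is useless in the regime where $N\gg \log M_N$ (as in sparse tensor PCA). The essential point is that $g$ is the supremum of a Gaussian process with only $M_N$ indices, and the sub-Gaussian concentration of such a supremum delivers the correct $O(\log M_N)$ scaling; this is also precisely what forces the identification of the critical SNR as $\lambda_N = 2\log M_N$.
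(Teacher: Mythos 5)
Your proof is correct. Its second half is essentially identical to the paper's: the paper also reduces everything to showing $\E_{\mathbf Y\sim\mathrm Q_{\lambda_N,N}}[\log^2 Z(\mathbf Y)]=O(\log^2 M_N)$ via the pointwise bound $|\log Z(Y)|\le\sqrt{\lambda_N}\max_{X'}\langle X',Y\rangle+\lambda_N/2$ and the estimate $\E[\max_{X'}\langle X',\mathbf Z\rangle^2]=O(\log M_N)$ for a maximum over the $M_N$ points of the support (Lemma~\ref{unionbound}), which is exactly your bound on $\E[g(\mathbf Y)^2]$. Where you genuinely differ is the first step. The paper obtains the decomposition
\begin{equation*}
\kl{\mathrm Q_{\lambda_N,N}}{\mathrm Q_{0,N}}\le\kl{\tilde{\mathrm Q}_{\lambda_N,N}}{\mathrm Q_{0,N}}+o(1)\cdot\sqrt{\E\left[\log^2 Z(\mathbf Y)\right]}
\end{equation*}
by invoking, \emph{mutatis mutandis}, the first two steps of the proof of Theorem~5 of Banks et al., whereas you derive an analogous inequality from scratch: Donsker--Varadhan applied to $(\tilde{\mathrm Q}_{\lambda_N,N},\mathrm Q_{0,N})$ with test function $\log L$ (noting $\E_{\mathrm Q_{0,N}}[e^{\log L}]=1$ and that the density of $\tilde{\mathrm Q}_{\lambda_N,N}$ with respect to $\mathrm Q_{0,N}$ is $LA_N/\alpha_N$), followed by the identity $\E_{\mathrm Q_{\lambda_N,N}}[A_N\log L]=\kl{\mathrm Q_{\lambda_N,N}}{\mathrm Q_{0,N}}-\E_{\mathrm Q_{\lambda_N,N}}[(1-A_N)\log L]$ and $\alpha_N\le1$, leaving the error term $\E_{\mathrm Q_{\lambda_N,N}}[(1-A_N)\log L]$, which Cauchy--Schwarz turns into $\sqrt{1-\alpha_N}\,\sqrt{\E[g^2]}$ with $1-\alpha_N\le\eps_N=o(1)$ by averaging the uniform conditional bound over $\mathbf X$. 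This buys self-containedness: the paper's proposition rests on an external argument that the reader must adapt, while yours is complete as written, and the variational step is airtight (integrability of $\log L$ under both measures follows from your upper bound $g$ and the lower bound obtained by keeping a single term of the sum defining $L$).
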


We now establish the following.
\begin{theorem}\label{thm:conditional_second_moment}
Assume that $\{\mathrm P_N\}$ has overlap rate function $r$.
Let $\lambda_N = 2 \log M_N$.
There exists a uniformly high probability sequence of events $\Omega_N$ such that
\begin{equation*}
\limsup_{N \to \infty} \frac{1}{\lambda_N} \kl{\tilde{\mathrm Q}_{\lambda_N, N}}{\mathrm Q_{0, N}} \leq \sup_{t \in [0, 1]} \left(\frac{t}{1 + t} - \frac{r(t)}{2}\right)_+\,.
\end{equation*}
\end{theorem}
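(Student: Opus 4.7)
The plan is to execute a conditional second moment method with a two-sided truncation on the signal--observation overlap $\langle \mathbf{X}, \mathbf{Y}\rangle$, in the spirit of \cite{BanksIT, PerWeiBan20}. Pick a slowly vanishing sequence $\delta_N \to 0$ with $\delta_N\sqrt{\lambda_N} \to \infty$ (for instance $\delta_N = (\log\log M_N)^{-1/2}$), let $J_N \defeq [(1-\delta_N)\sqrt{\lambda_N},\, (1+\delta_N)\sqrt{\lambda_N}]$, and set
\[
\Omega_N \defeq \bigl\{(x, y) : \langle x, y\rangle \in J_N \bigr\}.
\]
Under the planted model, $\langle \mathbf{X}, \mathbf{Y}\rangle = \sqrt{\lambda_N} + \langle \mathbf{X}, \mathbf{Z}\rangle$ with $\langle \mathbf{X}, \mathbf{Z}\rangle \sim N(0,1)$, so a Gaussian tail bound shows $\Omega_N$ is uniformly high probability in the sense of Definition~\ref{dfn:highprobevents}.

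By Jensen's inequality, $\kl{\tilde{\mathrm{Q}}_{\lambda_N, N}}{\mathrm{Q}_{0, N}} \leq \log \E_{\mathrm{Q}_{0,N}}\bigl[(d\tilde{\mathrm{Q}}_{\lambda_N, N}/d\mathrm{Q}_{0,N})^2\bigr]$. Since the normalization $\mathrm Q^{(\mathbf X, \mathbf Y)}_{\lambda_N, N}(\Omega_N)$ is $1-o(1)$, this reduces to bounding $\E_{\mathrm{Q}_{0,N}}[\rho_N^2]$ for $\rho_N(y) \defeq \E_{\mathbf{X} \sim \mathrm{P}_N}[\mathbf{1}_{(\mathbf{X}, y) \in \Omega_N}\, e^{\sqrt{\lambda_N}\langle \mathbf{X}, y\rangle - \lambda_N/2}]$. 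Expanding $\rho_N^2$ as an expectation over independent copies $(\mathbf{X}, \mathbf{X}')$, using the indicator to bound $\sqrt{\lambda_N}\langle \mathbf{X}+\mathbf{X}', y\rangle \leq 2(1+\delta_N)\lambda_N$, and integrating $y$ under $\mathrm{Q}_{0,N} = N(0, I_N)$ via Fubini reduces matters to a bivariate Gaussian probability:
\[
\E_{\mathrm{Q}_{0,N}}[\rho_N^2] \leq e^{(1+2\delta_N)\lambda_N}\, \E_{\mathbf{X}, \mathbf{X}'}\!\left[ \p_{\mathbf{Y} \sim N(0, I_N)}\bigl( \langle \mathbf{X}, \mathbf{Y}\rangle,\, \langle \mathbf{X}', \mathbf{Y}\rangle \in J_N \bigr) \right],
\]
where $(\langle \mathbf{X}, \mathbf{Y}\rangle, \langle \mathbf{X}', \mathbf{Y}\rangle)$ is jointly Gaussian with unit variances and covariance $t = \langle \mathbf{X}, \mathbf{X}'\rangle$.

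The key computation is the following density bound: for $|t|<1$, bounding by the supremum of the joint Gaussian density over the square $J_N \times J_N$ yields
\[
\p[\text{both in }J_N] \leq \frac{4\delta_N^2 \lambda_N}{2\pi\sqrt{1-t^2}}\, \exp\!\left(-\frac{\lambda_N(1-\delta_N)^2}{1+t}\right).
\]
Combined with the $e^{(1+2\delta_N)\lambda_N}$ prefactor, the $\lambda_N$-exponent simplifies to $\lambda_N t/(1+t) + o(\lambda_N)$, which is the crucial $1/(1+t)$ gain over the unconditional second-moment exponent $\lambda_N t$. Stratifying $[0, 1]$ by a finite grid $0 = t_0 < t_1 < \cdots < t_K$ and invoking $\mathrm P_N^{\otimes 2}[\langle \mathbf{X}, \mathbf{X}'\rangle \geq t_k] \leq M_N^{-r(t_k) + o(1)}$ together with $\lambda_N = 2\log M_N$ then produces $\exp\bigl(\lambda_N \sup_{t \in [0, 1]}(t/(1+t) - r(t)/2)_+ + o(\lambda_N)\bigr)$; negative overlaps contribute at most $e^{o(\lambda_N)}$ since $t/(1+t) \leq 0$ there.

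The main technical obstacle is the degeneracy of the joint density bound as $t \to 1$, where $(1-t^2)^{-1/2}$ diverges. We resolve this by splitting: for $t \leq 1-\eta$ (with $\eta > 0$ small and fixed, to be sent to $0$ at the end), $(1-t^2)^{-1/2}$ is bounded by a constant depending only on $\eta$ and the stratification above applies; for $t \in [1-\eta, 1]$ we replace the joint density bound by the cheaper marginal bound $\p[\text{both in }J_N] \leq \p[\langle \mathbf X, \mathbf Y\rangle \in J_N] \lesssim \delta_N\sqrt{\lambda_N}\, e^{-\lambda_N(1-\delta_N)^2/2}$. Weighted by $\mathrm P_N^{\otimes 2}[\langle \mathbf X, \mathbf X'\rangle \geq 1-\eta] \leq M_N^{-r(1-\eta) + o(1)}$, this contribution is at most $\exp\bigl(\lambda_N(1 - r(1-\eta))/2 + o(\lambda_N)\bigr)$, which exceeds the target by a factor at most $e^{\lambda_N O(\eta)}$ since $(1-\eta)/(2-\eta) \to 1/2$; sending $\eta \to 0$ after the $\limsup$ closes the argument.
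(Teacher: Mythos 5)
Your proposal is correct and is essentially the paper's own argument: the same two-sided truncation of $\langle \mathbf X,\mathbf Y\rangle$ around $\sqrt{\lambda_N}$, the same conditional $\chi^2$ bound via Fubini reducing to a bivariate Gaussian computation with exponent $t/(1+t)$, and the same stratification of the overlap against the rate function. The only (minor) divergence is at the degenerate end $t\to 1$: the paper gets a bound uniform in $\rho$ by passing to the univariate sum $\mathbf W+\mathbf W'\sim\cN(0,\tfrac{2}{\lambda_N}(1+\rho))$, whereas you use a crude marginal bound on $[1-\eta,1]$ and remove the resulting $O(\eta)$ error by a final limit --- both work, and the same remark applies to your slightly terse treatment of negative overlaps, which is easily repaired by a Gaussian tail bound on $\mathbf W+\mathbf W'$.
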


The hypothesis that $r(t) \geq \frac{2t}{1+t}$ implies that Assumption~\ref{spread} holds, so Theorem \ref{main_thm} follows immediately by combining Proposition \ref{prop:condKL}, Theorem \ref{area_thm} and Theorem~\ref{thm:conditional_second_moment}. For the rest section we focus on proving Theorem~\ref{thm:conditional_second_moment}.

\begin{proof}[Proof of Theorem~\ref{thm:conditional_second_moment}]
We define
\begin{equation*}
\Omega_N = \{(x, y): |\langle x, y \rangle - \sqrt{\lambda_N}| \leq \lambda_N^{1/4}\}\,.
\end{equation*}
Since $\lambda_N \to \infty$, the sequence $\Omega_N$ occurs with uniformly high probability.

We then have
\begin{equation*}
\frac{\mathrm d \mathrm {\tilde Q}_{\lambda_N, N}}{\mathrm d \mathrm Q_{0, N}}(\mathbf Y) = (1+o(1)) \E_{\mathbf X} \left\{\1_{\Omega_N}(\mathbf X, \mathbf Y) \cdot \exp\Big(\sqrt{\lambda_N} \langle \mathbf X, \mathbf Y \rangle - \frac{\lambda_N}{2}\Big)\right\}\,,
\end{equation*}
which implies
\begin{equation*}
\left(\frac{\mathrm d \mathrm{\tilde Q}_{\lambda_N, N}}{\mathrm d \mathrm Q_{0, N}}\right)^2(\mathbf Y) = (1+o(1)) \E_{\mathbf X, \mathbf X'} \left\{\1_{\Omega_N}(\mathbf X, \mathbf Y)\1_{\Omega_N}(\mathbf X', \mathbf Y) \cdot \exp\Big(\sqrt{\lambda_N} \langle \mathbf X + \mathbf X', \mathbf Y \rangle - \lambda_N\Big)\right\}\,,
\end{equation*}
where $\mathbf X, \mathbf X'$ are independent and identically distributed.
Applying Fubini's theorem, we obtain that the chi-square divergence satisfies
\begin{equation*}
1 + \chis{\tilde{\mathrm Q}_{\lambda_N, N}}{\mathrm Q_{0, N}} = (1+o(1)) \E_{\mathbf X, \mathbf X'}[m_N(\mathbf X, \mathbf X')]\,,
\end{equation*}
where
\begin{equation*}
m_N(X, X') \defeq \E \left\{\1_{\Omega_N}(X, \mathbf Z)\1_{\Omega_N}( X', \mathbf Z) \cdot \exp\Big(\sqrt{\lambda_N} \langle  X +  X', \mathbf Z \rangle - \lambda_N\Big)\right\}\,, \quad \mathbf Z \sim \mathrm Q_{0, \lambda_N}\,.
\end{equation*}
By the rotational invariance of the Gaussian measure, it is straightforward to show that $m_N$ depends only on the overlap $\langle X,  X' \rangle$ between $ X$ and $X'$, which we denote by $\rho$.
We require the following proposition.
\begin{proposition}\label{prop:mn_bound}
There exists a constant $C>0$ such that for all $\rho \in [-1, 1]$, we have
\begin{equation*}
\frac{1}{\lambda_N} \log m_N(\rho) \leq \left(\frac{\rho}{1+\rho}\right)_+ + \frac{C}{\lambda_N^{1/4}}\,,
\end{equation*}
where $(x)_+ \defeq \max\{x, 0\}$, and where we set $(\rho/(1+\rho))_+ = 0$ for $\rho = -1$.
\end{proposition}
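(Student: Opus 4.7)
The plan is to reduce $m_N(\rho)$ to a two-dimensional Gaussian computation and then handle the cases $\rho \le 0$ and $\rho \in (0, 1]$ separately. By rotational invariance of the standard Gaussian law on $\RR^N$, the value of $m_N$ depends on $(X, X')$ only through $\rho = \langle X, X' \rangle$. Writing $U = \langle X, \mathbf Z \rangle$ and $V = \langle X', \mathbf Z \rangle$, the pair $(U, V)$ is centered bivariate Gaussian with unit variances and correlation $\rho$, and
\begin{equation*}
m_N(\rho) = \E\!\left[\1\{|U - \sqrt{\lambda_N}| \vee |V - \sqrt{\lambda_N}| \le \lambda_N^{1/4}\} \cdot \exp\!\bigl(\sqrt{\lambda_N}(U+V) - \lambda_N\bigr)\right].
\end{equation*}

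For $\rho \le 0$ I would simply discard the indicator. Since $U + V \sim N(0, 2(1+\rho))$, the Gaussian moment generating function yields $\E[\exp(\sqrt{\lambda_N}(U+V))] = \exp(\lambda_N(1+\rho))$, hence $m_N(\rho) \le \exp(\lambda_N \rho) \le 1$. This matches the claim since $(\rho/(1+\rho))_+ = 0$ on $[-1, 0]$; at $\rho = -1$ the event $\Omega_N$ is in fact empty because $U = -V$ precludes both $U \approx \sqrt{\lambda_N}$ and $V \approx \sqrt{\lambda_N}$, consistent with the stated convention.

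For $\rho \in (0, 1]$ I would substitute $u = \sqrt{\lambda_N} + a$ and $v = \sqrt{\lambda_N} + b$ in the joint density of $(U, V)$ times $\exp(\sqrt{\lambda_N}(u+v) - \lambda_N)$ and expand the quadratic form. The identity $1 - \tfrac{1}{1+\rho} = \tfrac{\rho}{1+\rho}$ lets the factor $\exp(\lambda_N \rho/(1+\rho))$ pull out as a prefactor, leaving
\begin{equation*}
m_N(\rho) = \frac{\exp(\lambda_N\rho/(1+\rho))}{2\pi\sqrt{1-\rho^2}} \iint_{|a|,|b|\le \lambda_N^{1/4}} \exp\!\left(\frac{\sqrt{\lambda_N}\rho(a+b)}{1+\rho}\right) \exp\!\left(-\frac{a^2 - 2\rho ab + b^2}{2(1-\rho^2)}\right) \mathrm{d}a\, \mathrm{d}b\,.
\end{equation*}
The second exponential combined with $1/(2\pi\sqrt{1-\rho^2})$ is the Gaussian density on $\RR^2$ with covariance $\bigl(\begin{smallmatrix} 1 & \rho \\ \rho & 1 \end{smallmatrix}\bigr)$, hence integrates to at most $1$ over the restricted region. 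Meanwhile the linear tilt is uniformly bounded on $\{|a|, |b| \le \lambda_N^{1/4}\}$ by $\exp(2\sqrt{\lambda_N}\lambda_N^{1/4}\cdot\rho/(1+\rho)) \le \exp(\lambda_N^{3/4})$, using $\rho/(1+\rho) \le 1/2$. Combining these yields $\log m_N(\rho) \le \lambda_N\rho/(1+\rho) + \lambda_N^{3/4}$, which is the claimed bound after dividing by $\lambda_N$.

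The main obstacle is obtaining a constant $C$ that is uniform across $\rho \in [-1, 1]$. The uniform bound $\rho/(1+\rho) \le 1/2$ on $(0, 1]$ is precisely what keeps the linear-tilt contribution at $O(\lambda_N^{3/4})$ with no $\rho$-dependence; without it, the contribution could blow up as $\rho \to 1$. The degeneracy at $\rho = 1$ (where the covariance is singular) causes no difficulty: one can treat it as a one-dimensional integral along $U = V$, which by the same substitution yields $\log m_N(1) \le \lambda_N/2 + \lambda_N^{3/4}$ directly.
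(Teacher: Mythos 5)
Your proof is correct, and while it shares the paper's overall strategy (reduce to a bivariate Gaussian in the two overlaps, dispose of $\rho \le 0$ by dropping the indicator and using the moment generating function, and for $\rho>0$ exploit the localization near $\sqrt{\lambda_N}$ to extract the factor $e^{\lambda_N\rho/(1+\rho)}$), your treatment of the positive-overlap case is genuinely different in execution. The paper bounds the integral over the localization box $S$ by $\mathrm{vol}(S)$ times the supremum of the density times the supremum of the exponential tilt, and because the density prefactor $1/(1-\rho^2)$ degenerates as $\rho\to 1$ it must split into two subcases, $\rho\in(0,1/2]$ (direct two-dimensional bound) and $\rho\in(1/2,1]$ (reduce to the one-dimensional variable $U+V$, whose variance $2(1+\rho)$ stays bounded away from $0$). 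Your completion of the square via $u=\sqrt{\lambda_N}+a$, $v=\sqrt{\lambda_N}+b$ pulls out $e^{\lambda_N\rho/(1+\rho)}$ exactly, and the residual quadratic form together with $1/(2\pi\sqrt{1-\rho^2})$ is recognized as a probability density integrating to at most $1$; only the linear tilt, uniformly at most $e^{\lambda_N^{3/4}}$ on the box since $\rho/(1+\rho)\le 1/2$, remains to be bounded. This avoids both the volume factor and the case split, handles all $\rho\in(0,1)$ uniformly, and requires only the separate (and correctly handled) remark for the singular point $\rho=1$. The trade-off is purely stylistic: the paper's sup-times-volume bound is cruder but requires no exact algebra, while your exact factorization is cleaner and yields the explicit constant $C=1$.
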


We defer the proof of Proposition~\ref{prop:mn_bound} to the appendix and show how this claim implies the theorem.
Recall that $\kl{\tilde{\mathrm Q}_{\lambda_N, N}}{\mathrm Q_{0, N}} \leq \log(1 + \chis{\tilde{\mathrm Q}_{\lambda_N, N}}{\mathrm Q_{0, N}})$.
We therefore know that
\begin{equation}\label{KL_chi^2}
\limsup_{N \to \infty} \frac{1}{\lambda_N} \kl{\tilde{\mathrm Q}_{\lambda_N, N}}{\mathrm Q_{0, N}} \leq \limsup_{N \to \infty} \frac{1}{\lambda_N} \log \E [m_N(\rho)]\,, \quad \rho = \langle \mathbf X, \mathbf X' \rangle\,.
\end{equation}

We employ a standard large deviations argument.
Fix a positive integer $k$. We have
\begin{align*}
\E[m_N(\rho)] & \leq \sum_{\ell = -k}^{k-1} \mathrm P^{\otimes 2}_N[\rho \geq \ell/k] \sup_{t \in [\ell/k, (\ell+1)/k)} m_N(t) \\
& \leq 2k \cdot \max_{-k \leq \ell < k} \sup_{t \in [\ell/k, (\ell+1)/k)} \exp\left(\lambda_N \left(\frac{t}{1 + t}\right)_+ + \log \mathrm P^{\otimes 2}_N[\rho \geq \ell/k] + C\lambda_N^{3/4}\right) \\
& \leq 2k \cdot \sup_{t \in [-1, 1]} \exp\left(\lambda_N \left(\frac{t}{1 + t}\right)_+ + \log \mathrm P^{\otimes 2}_N[\rho \geq t] + C\lambda_N^{3/4} + O\left(\frac{\lambda_N}k\right)\right)\,.
\end{align*}
Since $\lambda_N \to \infty$, we have
\begin{equation*}
\limsup_{N \to \infty} \frac{1}{\lambda_N} \log \E[m_N(\rho)] \leq \sup_{t \in [-1, 1]} \left\{\left(\frac{t}{1 + t}\right)_+ -  \frac{r(t)}{2}\right\} + O(1/k)\,,
\end{equation*}
and letting $k \to \infty$ and using~\eqref{KL_chi^2} yields
\begin{equation*}
\limsup_{N \to \infty} \frac{1}{\lambda_N} \kl{\tilde{\mathrm Q}_{\lambda_N, N}}{\mathrm Q_{0, N}} \leq \sup_{t \in [-1, 1]} \left\{\left(\frac{t}{1 + t}\right)_+ -  \frac{r(t)}{2}\right\}\,.
\end{equation*}
Note that since $r$ is an overlap rate function, we must have $r(-1) = 0$; therefore, we obtain that the supremum on the right side is nonnegative.
However, since the quantity in question is nonpositive when $t < 0$, we can restrict to the interval $t \in [0, 1]$ without loss of generality.
This proves the claim.
\end{proof}

\section{Conclusion}
This work shows that the all-or-nothing phenomenon in Gaussian additive models is equivalent to a condition on the Kullback-Leibler divergence between the model at a particular SNR and a standard Gaussian vector.
Using this equivalence, we derive a simple condition on the overlaps which guarantees the existence of the all-or-nothing phenomenon, and as a corollary show that this phenomenon indeed arises in sparse tensor PCA for all sublinear sparsity levels.

While this paper gives a characterization of the all-or-nothing phenomenon for Gaussian models, we leave open the question of whether our framework can be extended to a more general setting.
Neither the results of~\cite{ReevesPhenom} for sparse linear regression, nor of~\cite{ScarlletAll} for Bernoulli group testing, nor of~\cite{luneau20} for sparse generalized linear models are immediately implied by our main theorems.
It may yet be possible to obtain more general results which encompass all of these settings.

Our results hold for the Bayes-optimal estimator $\E[\mathbf X | \mathbf Y]$, and we conjecture that the maximum likelihood estimator is also optimal in this setting and thereby also exhibits the all-or-nothing phenomenon. However, neither of these estimators is computationally efficient in general. Interestingly, in \cite{ReevesCAMSAP} the authors provide evidence that, in the context of sparse linear regression, the all-or-nothing phenomenon holds for the performance of a well-studied computationally efficient algorithm called Approximate Message Passing (AMP). Very recently, and more relevant to this work, \cite{barbier2020allornothing} established that the all-or-nothing phenomenon for the performance of AMP indeed holds in the context of sparse (matrix) PCA when the sparsity of the signal $k$ satisfies $k=\Theta\left(n/(\log n)^A \right)$ for some constant $A>1$. Developing a general theory for the optimal recovery thresholds for certain families of polynomial-time estimators---and establishing whether similar all-or-nothing behavior holds---is an important question for future work.

\bibliographystyle{alpha}
\bibliography{papers}
\newpage
\appendix
\section{Omitted proofs}
\begin{proof}[Proof of Proposition~\ref{prop:kl_limit}]
We recall the I-MMSE relation~\eqref{immse}:
\begin{equation*}
\frac{d}{d \beta} \frac{1}{\lambda_N} \kl{\mathrm Q_{\beta \lambda_N, N}}{\mathrm Q_{0, N}}=\frac{1}{2}-\frac{1}{2} \mmse{N}{\beta \lambda_N}\,.
\end{equation*}

Let us first assume that the all-or-nothing phenomenon holds.
Since $\kl{\mathrm Q_{0, N}}{\mathrm Q_{0, N}} = 0$, we can write
\begin{align*}
\lim_{N \to \infty} \frac{1}{\lambda_N} \kl{\mathrm Q_{\beta \lambda_N, N}}{\mathrm Q_{0, N}} & = \lim_{N \to \infty} \int_{0}^\beta \frac{d}{d \kappa} \frac{1}{\lambda_N} \kl{\mathrm Q_{\kappa \lambda_N, N}}{\mathrm Q_{0, N}} \dd \kappa \\
& = \lim_{N \to \infty} \int_{0}^\beta \frac 12 - \frac 12 \mmse{N}{\kappa \lambda_N} \dd \kappa\\
& \overset{(*)}{=} \int_{0}^\beta \frac 12 - \lim_{N \to \infty} \mmse{N}{\kappa \lambda_N} \dd \kappa \\
& = \frac 12 (\beta - 1)_+\,,
\end{align*}
where in $(*)$ we have used the dominated convergence theorem and the fact that $\mmse{N}{\kappa \lambda_N} \in [0, 1]$ and where the last equality follows from the all-or-nothing phenomenon.

In the other direction, we use the fact that $\mmse{N}{\beta \lambda_N}$ is a non-increasing function of $\beta$~\cite[see, e.g.,][Proposition 1.3.1]{Mio19}.
Combined with the I-MMSE relation, this immediately yields that $\frac{1}{\lambda_N} \kl{\mathrm Q_{\beta \lambda_N, N}}{\mathrm Q_{0, N}}$ is convex.
We therefore have by standard facts in convex analysis~\cite[Proposition 4.3.4]{HirLem93} that
\begin{equation*}
\frac 12 - \frac 12 \lim_{N \to \infty} \mmse{N}{\beta \lambda_N} = \lim_{N \to \infty} \frac{d}{d \beta} \frac{1}{\lambda_N} \kl{\mathrm Q_{\beta \lambda_N, N}}{\mathrm Q_{0, N}} = \frac{d}{d \beta} \left(\lim_{N \to \infty} \frac{1}{\lambda_N} \kl{\mathrm Q_{\beta \lambda_N, N}}{\mathrm Q_{0, N}}\right)
\end{equation*}
for all $\beta$ for which the right side exists.
Since we have assumed that $$\lim_{N \to \infty} \frac{1}{\lambda_N} \kl{\mathrm Q_{\beta \lambda_N, N}}{\mathrm Q_{0, N}}= \frac 12 (\beta - 1)_+,$$ the right side is $0$ when $\beta < 1$ and $\frac 12$ when $\beta > 1$.
The all-or-nothing property immediately follows.
\end{proof}
\begin{proof}[Proof of Proposition \ref{prop:sparse_tensor_overlap}]
Denote by $\mathcal S_k$ the set of $k$-sparse vectors in $\RR^p$.
Note that the cardinality of $\{0,1/\sqrt{k} \}^p \cap \mathcal S_k$ is $\binom{p}{k}$ and the cardinality of $\{-1/\sqrt{k} ,0,1/\sqrt{k} \}^p \cap \mathcal S_k$ is $\binom{p}{k} 2^k$.
In the case of the Bernoulli prior, the identification $\mathbf x \mapsto x^{\otimes d}$ is a bijection, so $M_N$ for the Bernoulli prior is $\binom{p}{k}$.
In the case of the Bernoulli-Rademacher prior, when $d$ is odd the map $\mathbf x \mapsto x^{\otimes d}$ is still a bijection, but when $d$ is even, the vectors $\mathbf x$ and $-\mathbf x$ give rise to the same tensor. Therefore $M_N$ for the Bernoulli-Rademacher prior is either $\binom{p}{k} 2^k$ or $\binom{p}{k} 2^{k-1}$.
Nevertheless, using Stirling's approximation, since $k=o(p)$, we have for both the Bernoulli and Bernoulli-Rademacher prior that
\begin{equation*}
\log M_N = (1+o(1))k \log \frac p k\,.
\end{equation*}

Now notice that the overlap $\langle \mathbf X, \mathbf X' \rangle $ in the case that $\mathbf x$ is Bernoulli-Rademacher is stochastically dominated by the overlap when $\mathbf x$ is Bernoulli. To prove this, let us consider the natural coupling between the two different priors on $\mathbf x$: we first sample $\mathbf x_1$ from the sparse Bernoulli distribution and then choose uniformly at random the signs for the non-zero values of $\mathbf x_1$ to form a sample $\mathbf x_2$ from the Bernoulli-Rademacher distribution. Notice that by triangle inequality under this coupling it holds almost surely $$\mathbf \langle \mathbf x_2^{\otimes d}, \mathbf x_2'^{\otimes d} \rangle  \leq |\langle \mathbf x_2^{\otimes d}, \mathbf x_2'^{\otimes d} \rangle   | \leq \langle \mathbf x_1^{\otimes d}, \mathbf x_1'^{\otimes d} \rangle .$$
For this reason it suffices to prove our result only in the case the prior $\tilde{\mathrm P}_p$ is the uniform distribution over $\{0,1/\sqrt{k} \}^p \cap \mathcal S_k$.
We therefore focus on this case in the rest of the proof.

Now fix any $t \in [0,1]$ and notice that by elementary algebra for any $v,v' \in \RR^p$ with $\|v\|=\|v'\|=1$ since $d \geq 2$ it holds $\langle v^{\otimes d}, v'^{\otimes d} \rangle =\langle v, v' \rangle^d \leq \langle v, v' \rangle^2$. Hence as $\mathbf x, \mathbf x'$ live on the sphere of dimension $p$,
\begin{align}
 \mathrm P_N^{\otimes 2} [\langle \mathbf X, \mathbf X' \rangle \geq t] = \tilde{\mathrm P}_p^{\otimes 2}[\langle \mathbf x^{\otimes d}, \mathbf x'^{\otimes d} \rangle \geq t] &=\tilde{\mathrm P}_p^{\otimes 2}[\langle \mathbf x, \mathbf x' \rangle^d \geq t] \nonumber\\
& \leq  \tilde{\mathrm P}_p^{\otimes 2}[\langle \mathbf x, \mathbf x' \rangle^2 \geq t] \nonumber\\
& =  \tilde{\mathrm P}_p^{\otimes 2}[\langle \mathbf x, \mathbf x' \rangle \geq \sqrt{t}]. \label{eq:algebra}
\end{align} Since $\mathbf x, \mathbf x'$ are drawn from the uniform distribution over $\{0,1/\sqrt{k} \}^p \cap \mathcal S_k$, Lemma \ref{lem:hypergeom} combined with \eqref{eq:algebra} yields
\begin{equation*}
\lim_{N \rightarrow +\infty} \frac{1}{\log M_N} \log \mathrm P_N^{\otimes 2}[\langle \mathbf X, \mathbf X' \rangle \geq t] \leq - \sqrt{t}.
\end{equation*} The elementary inequality $-\sqrt{t} \leq -\frac{2t}{1+t}$ concludes the proof.
\end{proof}

\begin{proof}[Proof of Proposition \ref{prop:condKL}]
Let
\begin{equation*}
Z\left(Y\right)=\frac{\mathrm Q_{\lambda_N,N}\left(Y\right)}{\mathrm Q_{0,N}\left(Y\right) }=\mathbb{E}_{\mathbf X' \sim \mathrm P_N} \exp \left( \sqrt{\lambda_N} \langle Y,\mathbf X'  \rangle -\frac{\lambda_N}{2}\right)
\end{equation*}

Following \emph{mutatis mutandis} the first two arguments in the proof of \cite[Theorem 5]{BanksIT} we obtain
\begin{equation}\label{eq:banks}
\kl{\mathrm Q_{\lambda_N,N}}{\mathrm  Q_{0,N}} \leq   \kl{\tilde{\mathrm Q}_{\lambda_N,N}}{\mathrm Q_{0,N}}+o\left( 1\right)\cdot \sqrt{ \E_{\mathbf Y \sim \mathrm Q_{\lambda_N, N}}\left[ \log^2 Z\left(\mathbf Y\right)\right]}.
\end{equation}

It is straightforward to see that for all $Y$,
\begin{equation*} |\log Z(Y)| \leq \sqrt{\lambda_N}\max_{X' \in \mathrm{Support}(P_N)} \langle X', Y \rangle+\frac{\lambda_N}{2}
\end{equation*} which implies that
\begin{equation}\label{eq:quadratic_ineq} \E_{\mathbf Y \sim \mathrm Q_{\lambda_N,N} } \log^2 Z(\mathbf Y) \leq 2\lambda_N  \cdot \E_{\mathbf Y \sim \mathrm Q_{\lambda_N,N}}\max_{X' \in \mathrm{Support}(P_N)} \langle X', \mathbf Y \rangle^2+O\left(\lambda^2_N\right).
\end{equation} Now recall $\mathbf Y=\sqrt{\lambda_N}\mathbf X+\mathbf Z$ for $\mathbf Z \sim Q_{0,N}$ and for all $X' \in \mathrm{Support}(P_N)$ it holds $|\langle \mathbf X , X'\rangle| \leq \|\mathbf X\|\|X'\| =1$ almost surely.
Hence,
\begin{align*}
 \E_{\mathbf Y \sim Q_{\lambda_N,N} }\max_{X' \in \mathrm{Support}(P_N)} \langle X', \mathbf Y \rangle^2 & =  \E_{\mathbf Z \sim Q_{0,N}  }\left(\max_{X' \in \mathrm{Support}(P_N)} |\sqrt{\lambda_N}\langle X', \mathbf X \rangle +   \langle X',  \mathbf Z \rangle| \right)^2 \\
& \leq 2 \lambda_N +2\E_{\mathbf Z \sim \mathrm Q_{0,N}} \max_{X' \in \mathrm{Support}(P_N)}  \langle X',  \mathbf Z \rangle^2.
\end{align*}
Since $\mathbf Q_{0,N}$ is simply the law of a vector with i.i.d. standard Gaussian coordinates and the cardinality of the discrete subset of the sphere $\mathrm{Support}(P_N)$ is equal to $M_N$, by Lemma \ref{unionbound} we have $\E_{\mathbf Z \sim \mathrm Q_{0,N}} \max_{X' \in \mathrm{Support}(P_N)}  \langle X',  \mathbf Z \rangle^2=O\left( \log M_N\right)$. Therefore since $\lambda_N=O(\log M_N)$,
\begin{align*}
 \E_{\mathbf Y \sim Q_{\lambda_N,N} }\max_{X' \in \mathrm{Support}(P_N)} \langle X', \mathbf Y \rangle ^2 \leq O\left( \lambda_N +\log M_N\right)=O\left( \log M_N\right).
\end{align*} Combining the last inequality with \eqref{eq:quadratic_ineq}, we conclude that
\begin{equation*} \E_{\mathbf Y \sim \mathrm Q_{\lambda_N,N} } \log^2 Z(\mathbf Y) = O\left(\lambda^2_N\right)=O\left(\log^2 M_N\right).
\end{equation*} Using \eqref{eq:banks} completes the proof of the proposition.
\end{proof}

\begin{proof}[Proof of Proposition~\ref{prop:mn_bound}]
We let $C$ denote an absolute positive constant whose value may change from line to line.
Let us write $\mathbf W = \langle  X, \mathbf Z \rangle/\sqrt{\lambda_N}$ and $\mathbf W' = \langle  X', \mathbf Z \rangle/\sqrt{\lambda_N}$. Recall that $X,X'$ lie on the sphere with $\langle  X, X' \rangle = \rho$.

Then $\mathbf W$ and $\mathbf W'$ are are jointly Gaussian with mean $0$ and covariance $\frac{1}{\lambda_N} \begin{pmatrix}1 & \rho \\ \rho & 1\end{pmatrix} =: \frac{1}{\lambda_N} \Sigma_\rho$.
Under this parametrization, we have
\begin{equation*}
\exp(\sqrt{\lambda_N}(\langle  X, \mathbf Z \rangle + \langle X', \mathbf Z \rangle) - \lambda_N) = \exp(\lambda_N(\mathbf W + \mathbf W' - 1))\,.
\end{equation*}
Let us write $S$ for the set $\{(w, w') : |w - 1| \leq \lambda_N^{-1/4}, |w' - 1| \leq \lambda_N^{-1/4}\}$.

We consider three cases:
\paragraph{Case 1: $\rho \leq 0$}
Using the moment generating function of the univariate normal distribution yields
\begin{equation*}
\E \exp(\lambda_N(\mathbf W + \mathbf W' - 1)) \1_{S}(\mathbf W, \mathbf W') \leq \E \exp(\lambda_N(\mathbf W + \mathbf W' - 1)) = e^{\lambda_N \rho} \leq 1\,,
\end{equation*}
so
\begin{equation*}
\frac{1}{\lambda_N} \log m_N(\rho) \leq 0 = \left(\frac{\rho}{1+\rho}\right)_+\,.
\end{equation*}

\paragraph{Case 2: $\rho \in (0, 1/2]$}
Write $\phi_\rho(w, w')$ for the joint density of $\mathbf W$ and $\mathbf W'$.
Note that on $S$
\begin{align*}
\phi_{\rho}(w, w') & \leq \frac{\lambda_N}{2 \pi (1-\rho^2)} \exp\left(- \frac{\lambda_N}{2} \mathbf w^\top \Sigma_\rho^{-1} \mathbf w\right)\,, \quad \quad \mathbf w = (w, w') \\
& \leq C e^{- \frac{\lambda_N}{1+\rho} + C \lambda_N^{3/4}},\,
\end{align*}where we use that $\lambda_N \rightarrow +\infty$ as $N \rightarrow +\infty$.
Hence
\begin{align*}
\frac{1}{\lambda_N} \log m_N(\rho) & = \frac{1}{\lambda_N} \log \int_S e^{\lambda_N(w + w' - 1)} \phi_\rho(w, w') \dd w \dd w' \\
& \leq \frac{1}{\lambda_N} \log \int_S \max_{(w, w') \in S} e^{\lambda_N(w + w' - 1)} \cdot \max_{(w, w') \in S}\phi_\rho(w, w') \dd w \dd w' \\
& \leq \frac{1}{\lambda_N} \log (\mathrm{vol}(S) \cdot e^{\lambda_N + O(\lambda_N^{3/4})} \cdot C e^{- \frac{\lambda_N}{1+\rho} + C \lambda_N^{3/4}})\\
& \leq \frac{\rho}{1+\rho} + \frac{C}{\lambda_N^{1/4}}\,.
\end{align*}

\paragraph{Case 3: $\rho \in (1/2, 1]$}
The sum $\mathbf W + \mathbf W'$ is Gaussian with mean $0$ and variance $\frac{2}{\lambda_N} (1 + \rho)$, and if $(w, w') \in S$, then $|w + w' - 2| \leq 2 \lambda_N^{-1/4}$.

We obtain
\begin{equation*}
m_N(\rho) = \E \exp(\lambda_N(\mathbf W + \mathbf W' - 1)) \1_{S}(\mathbf W, \mathbf W') \leq \E \exp(\lambda_N(\mathbf W'' - 1)) \1_{|\mathbf W'' - 2| \leq 2\lambda_N^{-1/4}}\,,
\end{equation*}
where $\mathbf W'' \sim \cN(0, \frac{2}{\lambda_N} (1 + \rho))$.
Similar with the analysis in 
Case 2, the density of $\mathbf W''$ is bounded by $C e^{- \frac{\lambda_N}{1 + \rho} + C \lambda_N^{3/4}}$ on the set $T \defeq \{w'': |w'' - 2| \leq 2\lambda_N^{-1/4}\}$, and we obtain
\begin{align*}
\frac{1}{\lambda_N} \log m_N(\rho) & \leq \frac{1}{\lambda_N} \log \int_{T} \max_{w'' \in T} e^{\lambda_N(w'' - 1)} \cdot C e^{- \frac{\lambda_N}{1 + \rho} + C \lambda_N^{3/4}} \\
& \leq \frac{1}{\lambda_N} \log (\mathrm{vol}(T) \cdot e^{\lambda_N + O(\lambda_N^{3/4})} \cdot C e^{- \frac{\lambda_N}{1+\rho} + C \lambda_N^{3/4}}) \\
& \leq \frac{\rho}{1+\rho} + \frac{C}{\lambda_N^{1/4}}\,,
\end{align*}
as claimed.
\end{proof}

\section{Additional lemmas}

\begin{lemma}\label{lem:mutual_information}
Denote by $I_{\lambda, N}(\mathbf X; \mathbf Y)$ the mutual information between $\mathbf X$ and $\mathbf Y$ in~\eqref{observation_model}, and denote by $\mathrm Q_{\lambda, N}^{(\mathbf X, \mathbf Y)}$ their joint law.
Then
\begin{equation*}
I_{\lambda, N}(\mathbf X; \mathbf Y) = \kl{\mathrm Q_{\lambda, N}^{(\mathbf X, \mathbf Y)}}{\mathrm P_N \otimes \mathrm Q_{\lambda, N}} = \frac{\lambda}{2} - \kl{\mathrm Q_{\lambda, N}}{\mathrm Q_{0, N}}\,.
\end{equation*}
\end{lemma}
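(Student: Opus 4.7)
The first equality is a direct restatement of the definition of mutual information: $I(\mathbf X;\mathbf Y)$ equals the KL divergence between the joint law $\mathrm Q_{\lambda,N}^{(\mathbf X,\mathbf Y)}$ and the product of the marginals $\mathrm P_N\otimes\mathrm Q_{\lambda,N}$. So no work is needed beyond pointing this out.

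For the second (nontrivial) equality, the plan is to compute the Radon--Nikodym derivative $\frac{d\mathrm Q_{\lambda,N}^{(\mathbf X,\mathbf Y)}}{d(\mathrm P_N\otimes \mathrm Q_{\lambda,N})}$ by passing through the reference measure $\mathrm P_N\otimes \mathrm Q_{0,N}$. Under $\mathrm P_N\otimes \mathrm Q_{0,N}$, the $\mathbf Y$-marginal is standard Gaussian, so writing $\phi$ for the standard Gaussian density on $\mathbb R^N$, the joint density of $(\mathbf X,\mathbf Y)$ under the model~\eqref{observation_model} is $\mathrm P_N(\mathbf X)\cdot\phi(\mathbf Y-\sqrt\lambda \mathbf X)$. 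Factoring,
\begin{equation*}
\frac{d\mathrm Q_{\lambda,N}^{(\mathbf X,\mathbf Y)}}{d(\mathrm P_N\otimes \mathrm Q_{\lambda,N})}(\mathbf X,\mathbf Y)=\frac{\phi(\mathbf Y-\sqrt\lambda \mathbf X)}{\phi(\mathbf Y)}\cdot \frac{d\mathrm Q_{0,N}}{d\mathrm Q_{\lambda,N}}(\mathbf Y).
\end{equation*}
The first factor simplifies to $\exp\!\bigl(\sqrt\lambda\,\langle \mathbf X,\mathbf Y\rangle-\tfrac{\lambda}{2}\|\mathbf X\|^2\bigr)$, and here I would invoke the key structural assumption that $\mathrm P_N$ is supported on the unit sphere, so $\|\mathbf X\|^2=1$ almost surely. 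The first factor therefore becomes $\exp\!\bigl(\sqrt\lambda\,\langle \mathbf X,\mathbf Y\rangle-\lambda/2\bigr)$.

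Taking logarithms and expectations under $\mathrm Q_{\lambda,N}^{(\mathbf X,\mathbf Y)}$ yields
\begin{equation*}
I_{\lambda,N}(\mathbf X;\mathbf Y)=\sqrt\lambda\,\mathbb E\langle \mathbf X,\mathbf Y\rangle - \frac\lambda 2 -\mathbb E_{\mathbf Y\sim\mathrm Q_{\lambda,N}}\!\Bigl[\log\frac{d\mathrm Q_{\lambda,N}}{d\mathrm Q_{0,N}}(\mathbf Y)\Bigr],
\end{equation*}
and the last term is precisely $-\kl{\mathrm Q_{\lambda,N}}{\mathrm Q_{0,N}}$ by definition of KL divergence. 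To finish, I would compute $\mathbb E\langle \mathbf X,\mathbf Y\rangle$ using $\mathbf Y=\sqrt\lambda\mathbf X+\mathbf Z$ with $\mathbf Z\perp \mathbf X$ and $\mathbb E\mathbf Z=0$, giving $\mathbb E\langle \mathbf X,\mathbf Y\rangle=\sqrt\lambda\,\mathbb E\|\mathbf X\|^2=\sqrt\lambda$, again by the sphere assumption. Plugging in gives $I_{\lambda,N}(\mathbf X;\mathbf Y)=\lambda-\lambda/2-\kl{\mathrm Q_{\lambda,N}}{\mathrm Q_{0,N}}=\lambda/2-\kl{\mathrm Q_{\lambda,N}}{\mathrm Q_{0,N}}$.

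There is no real obstacle here; the only subtlety is remembering to use $\|\mathbf X\|=1$ twice (once to simplify the Gaussian exponent and once to evaluate $\mathbb E\langle \mathbf X,\mathbf Y\rangle$). This is a standard identity for Gaussian channels with unit-norm inputs and is essentially a restatement of the relationship underlying the I-MMSE identity invoked earlier in the paper.
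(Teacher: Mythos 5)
Your proof is correct and follows essentially the same route as the paper's: both decompose $\log\frac{\mathrm Q_{\lambda,N}(\mathbf Y\mid\mathbf X)}{\mathrm Q_{\lambda,N}(\mathbf Y)}$ through the null reference measure $\mathrm Q_{0,N}$, evaluate the Gaussian log-likelihood-ratio term to $\lambda/2$ using $\|\mathbf X\|=1$, and identify the remaining term as $-\kl{\mathrm Q_{\lambda,N}}{\mathrm Q_{0,N}}$. No gaps.
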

\begin{proof}
The first equality is the definition of mutual information.
We then have
\begin{align*}
\kl{\mathrm Q_{\lambda, N}^{(\mathbf X, \mathbf Y)}}{\mathrm P_N \otimes \mathrm Q_{\lambda, N}} &= \E_{\mathrm Q_{\lambda, N}^{(\mathbf X, \mathbf Y)}} \log \frac{\mathrm Q_{\lambda, N} \left(\mathbf Y| \mathbf X\right) }{\mathrm Q_{\lambda, N} \left(\mathbf Y\right)} \\
&= \E_{\mathrm Q_{\lambda, N}^{(\mathbf X, \mathbf Y)}} \log \frac{\mathrm Q_{\lambda, N} \left(\mathbf Y|\mathbf X\right) }{\mathrm Q_{0, N} \left(\mathbf Y\right)}- \E_{\mathrm Q_{\lambda, N}} \log \frac{\mathrm Q_{\lambda, N} \left(\mathbf Y\right)}{\mathrm Q_{0, N} \left(\mathbf Y\right) }\,.
\end{align*}
Using the fact that $\mathbf Z$ has i.i.d. standard Gaussian entries we have
$$\E_{\mathrm Q_{\lambda, N}^{(\mathbf X, \mathbf Y)}} \log \frac{\mathrm Q_{\lambda,N} \left(\mathbf Y|\mathbf X\right) }{\mathrm Q_{0} \left( \mathbf Y\right)} =\E_{\mathrm Q_{\lambda, N}^{(\mathbf X, \mathbf Y)}} \frac{ \|\mathbf Y\|^2_2-\|\mathbf Y-\sqrt{\lambda}\mathbf X\|^2_2}{2}=\frac{\lambda}{2}\,,$$
and by definition
$$\kl{\mathrm Q_{\lambda, N}}{\mathrm Q_{0, N}}= \E_{\mathrm Q_{\lambda, N}} \log \frac{\mathrm Q_{\lambda, N} \left(\mathbf Y\right)}{\mathrm Q_{0, N} \left(\mathbf Y\right) }.$$
The claim follows.
\end{proof}

\begin{lemma}\label{lem:kl_facts}
For all $N$ and $\lambda > 0$, the function $\beta \mapsto \frac{1}{\lambda} \kl{\mathrm Q_{\beta \lambda, N}}{\mathrm Q_{0, N}}$ is nonnegative, nondecreasing, and $1/2$-Lipschitz.
\end{lemma}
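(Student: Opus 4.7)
The plan is to reduce all three claims to the I-MMSE relation, which expresses the derivative of $\beta \mapsto \frac{1}{\lambda} \kl{\mathrm Q_{\beta \lambda, N}}{\mathrm Q_{0, N}}$ in terms of the MMSE of the model.

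For nonnegativity, I would simply invoke Gibbs' inequality: the Kullback--Leibler divergence between any two probability measures is nonnegative, and dividing by $\lambda>0$ preserves this.

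For the other two claims, I would combine Lemma~\ref{lem:mutual_information} with the standard I-MMSE identity of Guo, Shamai and Verd\'u, applied to the rescaled model $\mathbf Y = \sqrt{\lambda'}\mathbf X + \mathbf Z$ with parameter $\lambda' = \beta \lambda$. This gives
\begin{equation*}
\frac{d}{d\beta} \frac{1}{\lambda} \kl{\mathrm Q_{\beta \lambda, N}}{\mathrm Q_{0, N}} = \frac{1}{2} - \frac{1}{2}\mmse{N}{\beta\lambda}.
\end{equation*}
The key point is then that $\mmse{N}{\beta\lambda} \in [0,1]$. The lower bound is trivial (MMSE is a squared error), and the upper bound follows because $\mathbf X$ lies on the unit sphere almost surely, so the trivial estimator $\hat{\mathbf X}=0$ already achieves mean squared error equal to $\E\|\mathbf X\|^2 = 1$, and the Bayes estimator can only do better. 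Hence the derivative lies in $[0, 1/2]$, yielding both the monotonicity and the $1/2$-Lipschitz property.

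I do not foresee a genuine obstacle here: every ingredient is either in the excerpt (Lemma~\ref{lem:mutual_information} and the I-MMSE identity~\eqref{immse} cited in the proof of Proposition~\ref{prop:kl_limit}) or is an immediate consequence of the unit-norm constraint on the prior. The only minor care needed is to apply I-MMSE with respect to the raw SNR variable and then use the chain rule $\frac{d}{d\beta} = \lambda \frac{d}{d\lambda'}$ so that the factor of $\lambda$ cancels the normalization $1/\lambda$, leaving a derivative of $\frac{1}{2}(1 - \mmse{N}{\beta\lambda})$ that is independent of $\lambda$.
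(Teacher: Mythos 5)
Your proposal is correct and matches the paper's own argument essentially step for step: nonnegativity from the nonnegativity of KL, and the remaining two properties from the I-MMSE relation together with the bound $\mmse{N}{\beta\lambda}\in[0,1]$ (which the paper asserts and you justify via the unit-norm prior and the zero estimator). No changes needed.
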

\begin{proof}
Let us fix some $N$ and $\lambda$. The nonnegativity follows from the nonnegativity of the KL divergence. 
By Lemma~\ref{lem:mutual_information}, we have
\begin{equation*}
\frac{1}{\lambda} \kl{\mathrm Q_{\beta \lambda, N}}{\mathrm Q_{0, N}} = \frac{\beta}{2} - \frac{1}{\lambda} I_{\beta \lambda, N}(\mathbf X; \mathbf Y)\,.
\end{equation*}
Differentiating with respect to $\beta$ and using the I-MMSE theorem~\cite{GuoShaVer05} we conclude
\begin{align}\label{immse}
\frac{d}{d \beta} \frac{1}{\lambda} \kl{\mathrm Q_{\beta \lambda, N}}{\mathrm Q_{0, N}}=\frac{1}{2}-\frac{1}{2} \mmse{N}{\beta \lambda}. 
\end{align}
The results that $\beta \mapsto \frac{1}{\lambda} \kl{\mathbf Y_{\beta \lambda}}{\mathbf Z}$ is nondecreasing and $1/2$-Lipschitz follow directly from the fact that $\mmse{N}{\beta \lambda} \in [0,1].$
\end{proof}

\begin{lemma}\label{lem:lb}
For all $\lambda \geq 0$,
\begin{equation*}
\kl{\mathrm Q_{\lambda, N}}{\mathrm Q_{0, N}} \geq \frac{\lambda}{2} - \log M_N\,.
\end{equation*}
\end{lemma}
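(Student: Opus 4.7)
The plan is to use Lemma~\ref{lem:mutual_information}, which converts the desired inequality into an upper bound on the mutual information between $\mathbf X$ and $\mathbf Y$. Specifically, Lemma~\ref{lem:mutual_information} gives
\begin{equation*}
\kl{\mathrm Q_{\lambda, N}}{\mathrm Q_{0, N}} = \frac{\lambda}{2} - I_{\lambda, N}(\mathbf X; \mathbf Y),
\end{equation*}
so the inequality $\kl{\mathrm Q_{\lambda, N}}{\mathrm Q_{0, N}} \geq \lambda/2 - \log M_N$ is equivalent to showing $I_{\lambda, N}(\mathbf X; \mathbf Y) \leq \log M_N$.

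The bound on the mutual information is a standard fact: $I(\mathbf X; \mathbf Y) \leq H(\mathbf X)$, where $H(\mathbf X)$ is the Shannon entropy of $\mathbf X$. Since $\mathbf X$ is by assumption uniformly distributed on a finite set of cardinality $M_N$, we have $H(\mathbf X) = \log M_N$, which completes the argument.

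There is no real obstacle here; the only subtlety is just recalling the standard identity $I(\mathbf X;\mathbf Y) = H(\mathbf X) - H(\mathbf X \mid \mathbf Y)$ and the nonnegativity of conditional entropy for the discrete variable $\mathbf X$.
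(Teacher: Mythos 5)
Your proof is correct, but it takes a different route from the paper's. The paper proves the lemma directly from the Gaussian likelihood: it writes $\kl{\mathrm Q_{\lambda,N}}{\mathrm Q_{0,N}} = \E \log \frac{1}{M_N}\sum_{X'} \exp(\sqrt{\lambda}\langle \mathbf Y, X'\rangle - \lambda/2)$ and lower bounds the sum over the support by its single ``planted'' term $X' = \mathbf X$, which immediately yields $\frac{\lambda}{2} - \log M_N$. You instead invoke Lemma~\ref{lem:mutual_information} to rewrite the claim as $I_{\lambda,N}(\mathbf X;\mathbf Y) \leq \log M_N$ and then use $I(\mathbf X;\mathbf Y) = H(\mathbf X) - H(\mathbf X\mid\mathbf Y) \leq H(\mathbf X) = \log M_N$, which is valid here since $\mathbf X$ is discrete and uniform on a set of size $M_N$ (so the conditional entropy is a nonnegative Shannon entropy), and there is no circularity because Lemma~\ref{lem:mutual_information} is proved independently. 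Your version is more conceptual --- it makes transparent that the $\log M_N$ deficit is exactly the entropy of the prior, and it would generalize verbatim to any discrete prior (with $\log M_N$ replaced by $H(\mathbf X)$ for non-uniform ones). The paper's version is the same inequality in disguise (retaining the planted term is the likelihood-ratio incarnation of $H(\mathbf X\mid\mathbf Y)\geq 0$), but it stays at the level of explicit Gaussian densities, matching the style of the second-moment computations elsewhere in the paper.
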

\begin{proof}
Writing explicitly the Kullback-Leibler divergence gives
\begin{align*}
\kl{\mathrm Q_{\lambda, N}}{\mathrm Q_{0, N}} & = \E \log \frac{1}{M_N} \sum_{X' \in \mathrm{Support}(\mathrm P_N)} \exp\left(\sqrt{\lambda} \langle \mathbf Y, X' \rangle - \frac{\lambda}{2}\right) \quad \quad \mathbf Y \sim \mathrm Q_{\lambda, N} \\
& \geq \E \log \frac{1}{M_N} \exp\left(\sqrt{\lambda} \langle \mathbf Z, \mathbf X \rangle + \frac{\lambda}{2}\right) \\
& = \E \left\{\sqrt{\lambda} \langle \mathbf Z, \mathbf X \rangle + \frac{\lambda}{2} - \log M_N\right\} = \frac{\lambda}{2} - \log M_N\,,
\end{align*}
where the inequality follows from writing $\mathbf Y = \sqrt{\lambda} \mathbf X + \mathbf Z$ and taking only the $X' = \mathbf X$ term in the sum.
\end{proof}

\begin{lemma}\label{binary_divergence}
Let $\alpha_1=(\alpha_1)_{N \in \mathbb{N}}$ and $\alpha_2=(\alpha_2)_{N \in \mathbb{N}}$ be two sequences in $[0, 1]$ such that $\alpha_1 = 1 - o(1)$ and $\alpha_2 = o(1)$ as $N \to \infty$, and let $\lambda_N$ be any sequence tending to infinity as $N \rightarrow +\infty$ such that $\frac{1}{\lambda_N} d(\alpha_1 \,\|\, \alpha_2)$ is bounded.
Then
\begin{equation*}
\limsup_{N \to \infty} \frac{1}{\lambda_N} d(\alpha_1 \,\|\, \alpha_2) = \limsup_{N \to \infty} \frac{1}{\lambda_N} \log \frac{1}{\alpha_2}\,.
\end{equation*}
\end{lemma}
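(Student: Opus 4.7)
The plan is to show that $d(\alpha_1 \,\|\, \alpha_2) = (1 + o(1))\log\frac{1}{\alpha_2} + o(1)$, and then use the boundedness assumption to divide by $\lambda_N$ and take limits safely.

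First, I would expand the binary divergence into four terms:
\begin{equation*}
d(\alpha_1 \,\|\, \alpha_2) = \alpha_1 \log \alpha_1 \;-\; \alpha_1 \log \alpha_2 \;+\; (1-\alpha_1)\log(1-\alpha_1) \;-\; (1-\alpha_1)\log(1-\alpha_2).
\end{equation*}
Three of these four terms are $o(1)$ under the stated asymptotics. Indeed, $\alpha_1 \to 1$ gives $\alpha_1 \log \alpha_1 = o(1)$; the elementary fact that $x \log x \to 0$ as $x \to 0^+$, applied to $x = 1-\alpha_1 = o(1)$, gives $(1-\alpha_1)\log(1-\alpha_1) = o(1)$; and since $\alpha_2 = o(1)$ implies $\log(1-\alpha_2) = o(1)$, we get $(1-\alpha_1)\log(1-\alpha_2) = o(1) \cdot o(1) = o(1)$. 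Collecting, we obtain the asymptotic identity
\begin{equation*}
d(\alpha_1 \,\|\, \alpha_2) = \alpha_1 \log \frac{1}{\alpha_2} + o(1) = (1 - o(1))\log\frac{1}{\alpha_2} + o(1).
\end{equation*}

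Next I would use the assumption that $\frac{1}{\lambda_N} d(\alpha_1 \,\|\, \alpha_2)$ is bounded to conclude that $\frac{1}{\lambda_N}\log\frac{1}{\alpha_2}$ is also bounded. The asymptotic identity above shows that eventually $d(\alpha_1 \,\|\, \alpha_2) \geq \tfrac{1}{2}\log\frac{1}{\alpha_2} - 1$, so $\log\frac{1}{\alpha_2} \leq 2 d(\alpha_1 \,\|\, \alpha_2) + 2$; dividing by $\lambda_N$ and using $\lambda_N \to \infty$ gives the desired boundedness.

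Finally, dividing the asymptotic expansion by $\lambda_N$, I would write
\begin{equation*}
\frac{1}{\lambda_N} d(\alpha_1 \,\|\, \alpha_2) = \frac{1}{\lambda_N} \log\frac{1}{\alpha_2} \;-\; o(1) \cdot \frac{1}{\lambda_N} \log\frac{1}{\alpha_2} \;+\; \frac{o(1)}{\lambda_N}.
\end{equation*}
Since the factor $\frac{1}{\lambda_N}\log\frac{1}{\alpha_2}$ is bounded by the previous step, the middle term is $o(1)$, and the final term is also $o(1)$. Taking $\limsup_{N\to\infty}$ of both sides yields the claimed equality. The only delicate point (and the reason the boundedness hypothesis is needed) is controlling the relative error $o(1) \cdot \log\frac{1}{\alpha_2}$ after dividing by $\lambda_N$; without boundedness, $\log\frac{1}{\alpha_2}$ could in principle outgrow $\lambda_N$ and make this term nontrivial. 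But with boundedness in hand, there is no real obstacle, and the argument is essentially a routine asymptotic expansion.
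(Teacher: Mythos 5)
Your proof is correct and follows essentially the same route as the paper's: expand the binary divergence, show the three terms other than $\alpha_1\log\frac{1}{\alpha_2}$ are negligible, and use the boundedness hypothesis to control the relative error $o(1)\cdot\frac{1}{\lambda_N}\log\frac{1}{\alpha_2}$ coming from replacing $\alpha_1$ by $1$. The only cosmetic difference is that the paper deduces boundedness of $\frac{1}{\lambda_N}\log\frac{1}{\alpha_2}$ from the boundedness of $\frac{1}{\lambda_N}\alpha_1\log\frac{1}{\alpha_2}$ (using that $\alpha_1$ is bounded away from $0$), whereas you derive an explicit eventual lower bound $d(\alpha_1\,\|\,\alpha_2)\geq \tfrac12\log\frac{1}{\alpha_2}-1$; both are fine.
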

\begin{proof}
The given asymptotics imply
\begin{equation*}
\lim_{N \to \infty} (1- \alpha_1) \log \frac{1 - \alpha_1}{1 - \alpha_2} = 0\,.
\end{equation*}
Moreover, since $\alpha_1 \log \alpha_1$ is bounded, we have
\begin{equation*}
\lim_{N \to \infty} \frac{1}{\lambda_N} \alpha_1 \log \alpha_1 = 0\,.
\end{equation*}
Combining these facts yields
\begin{align*}
\limsup_{N \to \infty} \frac{1}{\lambda_N} d(\alpha_1 \,\|\, \alpha_2) & = \limsup_{N \to \infty} \frac{1}{\lambda_N} \alpha_1 \log \frac{\alpha_1}{\alpha_2} + (1 - \alpha_1) \log \frac{1 - \alpha_1}{1 - \alpha_2} \\
& = \limsup_{N \to \infty} \frac{1}{\lambda_N} \alpha_1 \log \frac{1}{\alpha_2}\,.
\end{align*}
Since $\frac{1}{\lambda_N} d(\alpha_1 \,\|\, \alpha_2)$ is bounded, so is the sequence $\frac{1}{\lambda_N} \alpha_1 \log \frac{1}{\alpha_2}$, and since $\alpha_1$ is bounded away from $0$, this implies that $\frac{1}{\lambda_N} \log \frac{1}{\alpha_2}$ is bounded as well.
Using that $\lim_{N \to \infty} \alpha_1 = 1$ therefore yields the claim.
\end{proof}

\begin{lemma}\label{unionbound}
Let $M,N \in \mathbb{N}$ and let $S$ be a discrete subset of the $N$-dimensional unit sphere with cardinality $M$. Then for $G$ the law of the $N$-dimensional random variable $\mathbf Z$ with i.i.d. standard Gaussian coordinates it holds 
\begin{equation*}E_{\mathbf Z \sim G} \max_{X' \in S}  \langle X',  \mathbf Z \rangle^2  =O\left( \log M\right).\end{equation*}

\end{lemma}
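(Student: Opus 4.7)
The plan is to use a standard maximum-of-sub-Gaussians argument via the tail integral formula $\E[W] = \int_0^\infty \P[W > t]\,\dd t$. Since each $X' \in S$ lies on the unit sphere, the scalar $\langle X', \mathbf{Z}\rangle$ is a standard Gaussian (but the family is of course not independent across $X'$), hence $\langle X', \mathbf{Z}\rangle^2$ is distributed as a chi-square with one degree of freedom.

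First I would record the pointwise Gaussian tail bound: for each fixed $X'$ and any $t \geq 0$,
\begin{equation*}
\P[\langle X', \mathbf{Z}\rangle^2 > t] \leq 2 e^{-t/2}.
\end{equation*}
A union bound over the $M$ elements of $S$ then gives
\begin{equation*}
\P\bigl[\max_{X' \in S}\langle X', \mathbf{Z}\rangle^2 > t\bigr] \leq \min\{1, 2M e^{-t/2}\}.
\end{equation*}

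Next I would split the integral at the threshold $T = 4 \log M$, which makes the union bound non-trivial. Bounding by $1$ on $[0,T]$ and by $2M e^{-t/2}$ on $[T, \infty)$ yields
\begin{equation*}
\E \max_{X' \in S} \langle X', \mathbf{Z}\rangle^2 \leq T + \int_T^\infty 2M e^{-t/2}\,\dd t = 4 \log M + 4 M e^{-T/2} = 4 \log M + 4 M^{-1},
\end{equation*}
which is $O(\log M)$ as claimed. There is really no hard step here; the only care required is to choose $T$ large enough (any constant $C \geq 2$ in $T = C \log M$ works) so that the tail contribution beyond $T$ is bounded by a constant, and to remember to clip the union bound by $1$ on $[0,T]$ rather than use the (loose) exponential estimate there.
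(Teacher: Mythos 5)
Your proof is correct and follows essentially the same route as the paper's: both combine the tail-integral formula for the expectation with a union bound over the $M$ unit vectors and the standard Gaussian tail $\P[\langle X',\mathbf Z\rangle^2 > t]\leq 2e^{-t/2}$, then split the integral at a threshold of order $\log M$ (the paper uses $2\log M$, you use $4\log M$). No substantive difference.
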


\begin{proof}
It suffices to show that 
\begin{equation*}E_{\mathbf Z \sim G} \max_{X' \in S}  \langle X',  \mathbf Z \rangle^2 \1 \left( \max_{X' \in S} \langle X',  \mathbf Z \rangle^2  \geq 2 \log M \right) =O\left( 1\right).\end{equation*} or 
\begin{equation*}\int_{0}^{\infty} G\left(\max_{X' \in S}  \langle X',  \mathbf Z \rangle^2 \geq 2 \log M+t \right)\dd t =O\left( 1\right).\end{equation*} Using a union bound argument and the fact that for all $X' \in S$ the quantity $  \langle X',  \mathbf Z \rangle $ follows a standard Gaussian distribution, we have for all $t \geq 0$,
\begin{equation*}
G \left(\max_{X' \in S}  \langle X',  \mathbf Z \rangle^2 \geq 2 \log M+t \right) \leq M \exp \left(-\log M-\frac{t}{2}  \right)=\exp \left(-\frac{t}{2}  \right). \end{equation*}
Hence
\begin{equation*}\int_{0}^{\infty} G \left(\max_{X' \in S}  \langle X',  \mathbf Z \rangle^2 \geq 2 \log M+t \right)\dd t \leq \int_{0}^{\infty} \exp \left(-\frac{t}{2}\right) \dd t=O\left( 1\right),\end{equation*}as we wanted.

\end{proof}

\begin{lemma}\label{lem:hypergeom}
Suppose that $k=o(p)$ and the prior $\tilde{\mathrm P}_p$ is the uniform distribution on all the $k$-sparse vectors with elements either $0$ or $1/\sqrt{k}$. Then for any $t \in [0,1]$ it holds
\begin{equation*}
\limsup_{p \rightarrow + \infty}  \frac{1}{k \log \frac {p} {k}} \log \tilde{ \mathrm P}_p^{\otimes 2}[\langle \mathbf x, \mathbf x' \rangle \geq t] \leq -t.
\end{equation*}
\end{lemma}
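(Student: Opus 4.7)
The plan is to reduce the bound to a standard tail estimate for a hypergeometric distribution. The first step I would take is to let $S, S' \subseteq [p]$ denote the supports of $\mathbf x$ and $\mathbf x'$; since each nonzero entry equals $1/\sqrt{k}$, the overlap reads $\langle \mathbf x, \mathbf x' \rangle = |S \cap S'|/k$, and under $\tilde{\mathrm P}_p^{\otimes 2}$ the two sets are independent uniformly random $k$-subsets of $[p]$. Hence $H := |S \cap S'|$ is hypergeometric with parameters $(p,k,k)$. The case $t = 0$ is trivial (the probability is at most $1$), so I assume $t \in (0,1]$ and set $h := \lceil tk \rceil$; the task becomes showing $\log \Pr[H \geq h] \leq -t\, k\log(p/k)\,(1+o(1))$ as $p \to \infty$.

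The second step is a simple union bound over size-$h$ subsets of $S$: for any fixed $T \subseteq [p]$ with $|T| = h$,
\begin{equation*}
\Pr[H \geq h] \leq \binom{k}{h}\Pr[T \subseteq S'] = \binom{k}{h}\prod_{i=0}^{h-1}\frac{k-i}{p-i} \leq \binom{k}{h}\left(\frac{k}{p-h+1}\right)^{h}.
\end{equation*}
Combining with the standard estimate $\binom{k}{h} \leq (ek/h)^{h}$ gives
\begin{equation*}
\Pr[H \geq h] \leq \left(\frac{ek^{2}}{h(p-h+1)}\right)^{h}.
\end{equation*}

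The third step is to take logarithms and pass to the limit. Since $k = o(p)$, we have $p - h + 1 = (1+o(1))p$ and $h = (1+o(1))tk$, so
\begin{equation*}
\log \Pr[H \geq h] \leq h\log\left(\frac{ek}{t(p-h+1)}\right) = -\, tk\log(p/k) + O(k).
\end{equation*}
Dividing by $k\log(p/k)$ and using $\log(p/k) \to \infty$ (a consequence of $k = o(p)$), the $O(k)$ term vanishes in the limit, which delivers $\limsup_{p \to \infty}\frac{1}{k\log(p/k)} \log \Pr[H \geq \lceil tk\rceil] \leq -t$, as desired. There is no real obstacle; the only point that requires care is checking that the $O(k)$ remainder is genuinely absorbed by the $k\log(p/k)$ normalization, which reduces directly to the hypothesis that $\log(p/k) \to \infty$, i.e., to $k = o(p)$.
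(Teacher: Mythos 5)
Your proof is correct, but it takes a genuinely different route from the paper's. You reduce the tail event $\{|S \cap S'| \geq h\}$ to a union bound over the $\binom{k}{h}$ size-$h$ subsets $T$ of $S$ (conditioning on $S$ and using that each $\Pr[T \subseteq S'] = \prod_{i=0}^{h-1}\frac{k-i}{p-i}$ by exchangeability), and then invoke $\binom{k}{h} \leq (ek/h)^h$; this yields the clean bound $\left(\frac{ek^2}{h(p-h+1)}\right)^h$ and the conclusion follows from $k = o(p)$ exactly as you say. The paper instead works with the exact hypergeometric mass function $p(s) = \binom{k}{s}\binom{p-k}{k-s}/\binom{p}{k}$, shows the consecutive ratio $p(s+1)/p(s)$ is below $1/2$ for $s \geq \lceil tk \rceil$ once $p$ is large (using $k = o(p)$), so the tail sum is at most $2\,p(\lceil tk\rceil)$, and then estimates that single term with binomial-coefficient bounds to get the same $-tk\log(p/k) + O(k)$. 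Your union-bound argument is somewhat more elementary and self-contained (no PMF ratio analysis), at the cost of the factor $(ek/h)^h$, which is harmless here since it only contributes to the $O(k)$ remainder; both approaches absorb that remainder using $\log(p/k) \to \infty$. The only point worth making explicit in a final write-up is the conditioning on $S$ that justifies the union bound over the random collection of subsets of $S$, but as you have it the step is sound.
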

\begin{proof} First note that the claim follows immediately when $t=1$ as $\tilde{\mathrm P}_p$ is a uniform distribution over a discrete space of cardinality $\binom{p}{k}$ and in our regime where $k=o(p)$ it holds $\log \binom{p}{k}=(1+o(1))k \log \frac {p} {k}$. Similarly, since for all $v,v'$ in the support of $\tilde{\mathrm P}_p$ it holds $\langle v,v' \rangle \geq 0$, the claim also follows straightforwardly for $t=0$. For the rest of the proof we assume $t \in (0,1)$.

The distribution of the rescaled overlap $k \langle \mathbf x,\mathbf x' \rangle=\langle \sqrt{k}\mathbf x,\sqrt{k}\mathbf x' \rangle$ follows the Hypergeometric distribution $\mathrm{Hyp}\left(p,k,k\right)$ with probability mass function $p(s)=\binom{k}{s}\binom{p-k}{k-s}/ \binom{p}{k}, s=0,1,\ldots,k.$
Therefore for a fixed $t \in (0,1]$,
\begin{equation}\label{Hyp:totalprob}
\tilde{\mathrm P}_p^{\otimes 2}[\langle \mathbf x, \mathbf x' \rangle \geq t]=\sum_{s=\ceil{tk}}^{k} p(s).
\end{equation}
Now for any $s \geq \ceil{tk}$ it holds
\begin{align*}
\frac{p(s+1)}{p(s)}=\frac{\binom{k}{s+1}}{\binom{k}{s}} \frac{\binom{p-k}{k-s-1}}{\binom{p-k}{k-s}} =\frac{(k-s)^2}{(s+1)(p-2k+s+1)}.
\end{align*} Using that $k=o(p)$ and $s \geq tk$ we conclude that for sufficiently large $p$ and all $s \geq \ceil{tk}$ it holds \begin{align*}
\frac{p(s+1)}{p(s)}\leq 2\frac{k}{tp}<\frac{1}{2}.
\end{align*} or by telescopic product,
\begin{align*}
\frac{p(s)}{p(\ceil{tk})}\leq \frac{1}{2^{s-\ceil{tk}}}.
\end{align*} Hence, using  \eqref{Hyp:totalprob} we have for large enough values of $p$,
\begin{equation}\label{Hyp:ineq1}
\tilde{\mathrm P}_p^{\otimes 2}[\langle \mathbf x, \mathbf x' \rangle \geq t] \leq \sum_{s=\ceil{tk}}^{k} p(\ceil{tk})\frac{1}{2^{s-\ceil{tk}}} \leq 2 p(\ceil{tk}).
\end{equation} 
Now using that $k<p/2$ for large enough $p$ we have
\begin{equation*}
p(\ceil{tk}) =\binom{k}{\ceil{tk}}\binom{p-k}{k-\ceil{tk}}/ \binom{p}{k} \leq 2^k \binom{p}{(1-t)k}/ \binom{p}{k}
\end{equation*} and combining with the elementary bounds $$m_2 \log \left(\frac{m_1}{m_2}\right) \leq \log \binom{m_1}{m_2} \leq m_2 \log \left(\frac{e m_1}{m_2}\right) =m_2\log \left(\frac{m_1}{m_2}\right)+O(m_2),$$ where  $m_1,m_2 \in \mathbb{N}$ with $m_1 \leq m_2$ we have 
\begin{align*}
\log p(\ceil{tk})  &\leq \left((1-t)k+1\right) \log \frac{p}{(1-t)k}-k\log \frac{p}{k} +O\left(k\right)\\
& = -tk \log \frac{p}{k} +(1-t)k \log \frac{k}{(1-t)k+1}+O\left(k\right),
\end{align*} where the last equality follows from elementary manipulations. Since $t $ is a fixed number in $(0,1)$ we have  
$(1-t)k \log \frac{k}{(1-t)k+1} \leq (1-t)k \log \frac{1}{1-t}=O\left(k\right)$ and therefore we conclude 
\begin{equation}\label{final_tk}
\log p(\ceil{tk}) \leq -tk \log \frac{p}{k} +O\left(k\right).
\end{equation} Combining \eqref{Hyp:ineq1} and \eqref{final_tk} and using the fact that $k=o(p)$ completes the proof.
\end{proof}

\end{document}